\newcommand \reg{\operatorname{reg}}
\newcommand \dist{\operatorname{dist}}
\newcommand \cone{\operatorname{cone}}
\newcommand \Tor{\operatorname{Tor}}
\newcommand \soc{\operatorname{Soc}}
\newcommand \pd{\operatorname{pdim}}
\newcommand \ini{\operatorname{in}}
\newcommand \K{\mathbb{K}}
\newtheorem{theorem}{Theorem}[section]
\newtheorem{definition}[theorem]{Definition}
\newtheorem{lemma}[theorem]{Lemma}
\newtheorem{proposition}[theorem]{Proposition}
\newtheorem{example}[theorem]{Example}
\newtheorem{question}[theorem]{Question}
\newtheorem{corollary}[theorem]{Corollary}
\newtheorem{remark}[theorem]{Remark}
\newtheorem{cons}[theorem]{Construction}
\begin{document}
\title[Level and pseudo-Gorenstein binomial edge ideals]{Level and pseudo-Gorenstein binomial edge ideals}

	\author[Giancarlo Rinaldo]{Giancarlo Rinaldo}
\email{giancarlo.rinaldo@unime.it}
\address{Department	of Mathematics, Informatics, Physics and Earth Science, University of Messina, Viale F. Stagno d’Alcontres, 31, Messina, 98166, Italy}
\author[Rajib Sarkar]{Rajib Sarkar}
\email{rajib@math.tifr.res.in/rajib.sarkar63@gmail.com}
\address{School of Mathematics, Tata Institute of Fundamental Research Bombay, Navy Nagar, Mumbai, 400005, India}

\begin{abstract}
We prove that level binomial edge ideals with regularity $2$ and pseudo-Gorenstein binomial edge ideals with regularity $3$ are cones, and we describe them completely.
Also, we characterize level and pseudo-Gorenstein binomial edge ideals of bipartite graphs. 
\end{abstract}
\keywords{Binomial edge ideal, Cohen-Macaulay, Level, pseudo-Gorenstein, Castelnuovo-Mumford regularity}
\thanks{AMS Subject Classification (2020): 13D02, 13C14, 05E40}
\maketitle

\section{Introduction}
Let $G$ be a simple graph with the vertex set $[n]$ and the edge set $E(G)$. The notion of binomial edge ideals was introduced by Herzog et al. in \cite{HHHKR}, and independently by Ohtani in \cite{oh}, see \cite{HHO-book} for nice survey on this topic. After that, many authors focused on the  problem
of finding a characterization of Cohen–Macaulay binomial edge ideals. There are several attempts to solve this problem for some families of graphs. Some papers in this direction are \cite{BMRS,DAV,DAV2,EHH-NMJ,AR2,LMRR-S_2,RR,Rinaldo-BMS,Rinaldo-Cactus,KM-JA,SS-22}
In particular, in \cite{DAV2},  the authors introduce nice combinatorial properties of Cohen-Macaulay binomial edge ideals that have been useful in \cite{LMRR-S_2} to give a computational classification of all the Cohen-Macaulay binomial edge ideals of graphs with at most $12$ vertices.

Nevertheless, as in the case of classical edge ideals, an exhaustive classiﬁcation of graphs whose binomial edge ideals are Cohen-Macaulay seems to be a hopeless task.

In the same time a fundamental algebraic invariant has been studied in deep that is the Castelnuovo-Mumford regularity of binomial edge ideal, using the combinatorics of the underlying graph.  This has been used to classify particular Cohen-Macaulay binomial edge ideals (see \cite{ERT-Licci} and \cite{KM-JA}).

Recently,  Gorenstein binomial edge ideals have been completely characterized  (see \cite{GBEI}). They are the ones defined on paths.

There are two interesting generalizations of Gorenstein rings: level rings (see \cite{Stanley}) and  pseudo-Gorenstein rings (see \cite{EHHM-pseudo}). Observing that  a ring  is Gorenstein if and only if the canonical module is a cyclic module, and hence generated in a single degree, the two generalizations naturally arise. In fact, if  one only
requires that the generators of the canonical module  are of the same degree, then the ring is called level,
and if one requires that there is only one generator of least degree, then we call it  pseudo-Gorenstein.

Two useful tools for the classification of Cohen-Macaulay binomial edge ideals are the constructions described in the paper \cite{RR}. As an example those constructions were used in deep to characterize the Cohen-Macaulay bipartite  graphs (see \cite{DAV}). The first one  is given by the union of two Cohen-Macaulay graphs along one of their simplicial vertices, and  the second one by cones from a vertex to two Cohen-Macaulay connected components. 

In the first section, we prove that the behavior of these two constructions extends naturally to level and pseudo-Gorenstein rings, giving us a way to generate infinite families of level and pseudo-Gorenstein rings by simple ones. Another  immediate consequence of the first construction is that block graphs are level, and within them, the only pseudo-Gorenstein are the Gorenstein ones. With respect to the cone construction, we observe that level cones need to have regularity $2$. This is a fundamental tool to classify all the level rings of regularity 2 that is one of the main aims of Section \ref{sec:regularity2}. In the meantime we describe a way to construct infinite families of pseudo-Gorenstein rings with fixed regularity.

In the paper \cite{KM-JA}, the binomial edge ideals with regularity $2$ is given. By this result and improving \cite[Proposition 3.4]{KM-JA} in Section \ref{sec:regularity2}, we classify the Cohen-Macaulay rings with regularity $2$ and prove that they are all level. A complete description of those graphs is given in Construction \ref{construction-CM-2}, that uses inductively cones. One can easily see that the only pseudo-Gorenstein rings of regularity 2 are the Gorenstein ones.

In general, level binomial edge ideals with regularity $3$ are not cones. An example is the bipartite graph $F_m$ (see Section \ref{sec:bipartite}). In Section \ref{sec:regularity3}, we classify all pseudo-Gorenstein rings of regularity $3$. These are the cones defined in Construction \ref{construction-pseudo-Gorenstein}. To obtain this result, we prove another one that is interesting by itself. Any $P_5$-free accessible graph (see \cite{DAV2}) is Cohen-Macaulay, giving a positive answer to the \cite[Conjecture 1.1]{DAV2}. Observe that there exist pseudo-Gorenstein rings of regularity $4$ that are not cones (see Section \ref{sec: computation}). 

In \cite{DAV}, a complete classification of Cohen-Macaulay bipartite binomial edge ideal is given due to two constructions. The first is related to the decomposability one. The second one is obtained by collapsing two leaves of two graphs and gluing them, see Definition \ref{def:dot}, namely the $\circ$-operation. The $\circ$-operation has different behavior with respect to level and pseudo-Gorenstein binomial edge ideal of bipartite graphs. In fact, by Theorem \ref{thm:level-F_m} $F_m$ is level, but using the $\circ$-operation on two or more $F_m$ graphs we obtain a non-zero Betti number, whose position is induced by the diameter and it is not the extremal one, see Lemma \ref{lemm:non-zero-betti-number}.
In the pseudo-Gorenstein case $F_m$ is pseudo-Gorenstein if and only if $m\in \{1,2\}$ that is, $F_m$ is a path. Moreover, the $\circ$-operation gives us an infinite family of pseudo-Gorenstein bipartite graphs that are not paths. In particular, these graphs are of the form either $F_3\circ F_3$ or $F_{m_1}\circ F_{m_2}\circ \cdots \circ F_{m_t}$, where $m_1=m_t=3$ and $m_i=4$ for $2\leq i\leq t-1$ with $t\geq 3$, see Theorem \ref{thm:pseduo-Gorenstein-bipartite}.

Another interesting structure related to our research is given by matroid simplicial complexes. In general (see \cite[Chapter III, Section 3]{Stanley}) if $\Delta$ is matroid, then the monomial ideal $I(\Delta)$ is level. Since a complete description of the initial ideal of $J_G$ and its simplicial complex has been given (\cite{LMRR-S_2}), in Section \ref{sec:matroid} we study under what condition this complex is a matroid one. We prove that the only one that is matroid is related to the path.

In the last section, by using a computational approach and the database of all Cohen-Macaulay binomial edge ideals on connected and indecomposable graphs with at most 12 vertices (see \cite{LMRR}), we provide a list of all level and pseudo-Gorenstein binomial edge ideals with at most 12 vertices that is downloadable from \cite{RS}.
In the table \ref{tab:indec}, we provide the cardinalities of these sets with respect to the number of vertices, too. Moreover, some interesting examples are examined.

\vskip 2mm
\noindent
\textbf{Acknowledgment:}
The second author thanks the University of Messina for the hospitality where some of the works have been done.

\section{Decomposability and Cone constructions}\label{sec:decomposablity}
First, we recall some definitions and notation of graphs that we will use in this article. Let $G$ be a simple graph with the vertex set $V(G)=[n]:= \{1, \ldots, n\}$ and edge set $E(G)$. For $i, j \in V(G),$ let $\dist_G(i,j)$ denote the smallest length of paths connecting $i$ and $j$ in $G$. Let $d(G) = \max\{\dist_G(i,j) ~:~ i, j \in [n]\}$ denote the \textit{diameter} of $G$. A subset $C\subseteq V(G)$ is said to be a \textit{clique} of $G$ if for all $i,j\in C$ with $i \neq j$ one has $\{i, j\} \in E(G)$. The \textit{clique complex} $\Delta(G)$ of $G$ is the simplicial complex of all its cliques. A clique $C$ of $G$ is called a \textit{face} of $\Delta(G)$ and its \textit{dimension} is given by $|C| -1$.  A vertex of $G$ is called a {\em simplicial vertex} of $G$ if it belongs to exactly one maximal clique of $G$. A vertex of $G$ is called a \textit{cut vertex} if the removal of the vertex increases the number of connected components. 
For each $i \in T$, if $i$ is a cut vertex of the induced subgraph on $([n]\setminus T)\cup \{i\}$,
then we say that $T$ has the cut point property. Set $\mathcal{M}(G) = \{ T: T \; \text{has the cut point property} \}$. A graph $G$ is {\em decomposable}, if there exist two subgraphs $G_1$ and $G_2$ of $G$,  and a decomposition $G=G_1\cup G_2$ with $V(G_1)\cap V(G_2)=\{v\}$, where $v$ is a simplicial vertex of $G_1$ and $G_2$. If $G$ is not decomposable, we call it \textit{indecomposable}.

Let $S=
\K[x_1, \ldots, x_{n}, y_1, \ldots, y_{n}]$ be the polynomial ring over an arbitrary field $\K$. The \textit{binomial edge ideal} of $G$ is defined as $$J_G:=(
x_i y_j - x_j y_i ~ : i < j \text{ and } \{i,j\}\in E(G)) \subseteq S.$$

Let $M$ be a finitely graded $S$-module and $\beta_{i,i+j}(M)$ the graded Betti numbers. The 
\textit{projective dimension} of $M$ is defined as $\pd(M):=\max\{i : \beta_{i,i+j}(M) \neq 0 \text{ for some } j\}$
and the \textit{Castelnuovo-Mumford regularity} (or simply, \textit{regularity}) of $M$ is defined as 
$\reg(M):=\max \{j : \beta_{i,i+j}(M) \neq 0 \text{ for some } i\}.$ If $\beta_{i,i+j}(M)\neq 0$ and for all pairs $(k,l)\neq (i,j)$ with $k\geq i$ and $l\geq j$, $\beta_{k,k+l}(M)=0$, then $\beta_{i,i+j}(M)$ is called an \textit{extremal Betti number} of $M$. When $M$ has an unique extremal Betti number, we denote it by $\hat{\beta}(M)$. We recall that if $M$ is Cohen-Macaulay, then it admits the unique extremal Betti number that is $\hat{\beta}(M)=\beta_{p,p+r}(M)$, where $p=\pd(M)$ and $r=\reg(M)$.

In \cite[Proposition 3]{HR-EJC},
Herzog and Rinaldo obtain the formula for the Betti polynomial of a decomposable graph in terms of the Betti polynomial of its indecomposable subgraphs. With the help of this result, we prove the levelness and pseudo-Gorensteinness on decomposable graphs.
\begin{proposition}\label{prop:decomposability-level-pseudo}
	Let $G$  be a decomposable  graph with decomposition $G=G_1\cup G_2$. Then $S/J_G$ is level (respectively, pseudo-Gorenstein) if and only if $S_{1}/J_{G_1}$ and $S_{2}/J_{G_2}$ are level (respectively, pseudo-Gorenstein), where $S_{i}=\K[x_j,y_j:j\in V(G_i)]$ for $i=1,2$.
\end{proposition}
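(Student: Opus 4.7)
The plan is to reduce the statement to a comparison of the Betti numbers in the last homological position of the minimal free resolutions, and then exploit the multiplicative formula of Herzog--Rinaldo for decomposable graphs.

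By \cite[Proposition 3]{HR-EJC}, whenever $G=G_1\cup G_2$ is a decomposable graph with common simplicial vertex, the Betti polynomial factorizes as
$$B_{S/J_G}(s,t)=B_{S_1/J_{G_1}}(s,t)\cdot B_{S_2/J_{G_2}}(s,t),$$
which unpacks to
$$\beta_{i,i+j}(S/J_G)=\sum_{\substack{i_1+i_2=i\\ j_1+j_2=j}}\beta_{i_1,i_1+j_1}(S_1/J_{G_1})\,\beta_{i_2,i_2+j_2}(S_2/J_{G_2}).$$
From this formula I would first note that $\pd(S/J_G)=p_1+p_2$ and $\reg(S/J_G)=r_1+r_2$, where $p_i=\pd(S_i/J_{G_i})$ and $r_i=\reg(S_i/J_{G_i})$, and that $S/J_G$ is Cohen-Macaulay if and only if both $S_i/J_{G_i}$ are (so the level/pseudo-Gorenstein hypotheses are meaningful on each side).

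Next I would zoom in on the last row of the Betti table. Setting $i=p:=p_1+p_2$ in the formula, the bounds $i_1\le p_1$, $i_2\le p_2$ coming from projective dimension force $i_1=p_1$ and $i_2=p_2$, so
$$\beta_{p,p+j}(S/J_G)=\sum_{j_1+j_2=j}\beta_{p_1,p_1+j_1}(S_1/J_{G_1})\,\beta_{p_2,p_2+j_2}(S_2/J_{G_2}).$$
For the level equivalence I would use the standard reformulation that a Cohen-Macaulay ring is level iff $\beta_{p,p+j}=0$ for every $j<r$. If both $S_i/J_{G_i}$ are level and $j<r_1+r_2$, in every summand above we must have $j_1<r_1$ or $j_2<r_2$, so the corresponding factor vanishes; hence $\beta_{p,p+j}(S/J_G)=0$. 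Conversely, if say $S_1/J_{G_1}$ fails to be level, I would pick $j_1<r_1$ with $\beta_{p_1,p_1+j_1}(S_1/J_{G_1})\ne 0$ and pair it with $j_2=r_2$, where $\beta_{p_2,p_2+r_2}(S_2/J_{G_2})=\hat{\beta}(S_2/J_{G_2})\ne 0$ by Cohen-Macaulayness; this produces a nonzero contribution at $j=j_1+r_2<r$, obstructing levelness of $S/J_G$.

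For the pseudo-Gorenstein case I would use the characterization that in the Cohen-Macaulay setting the property is equivalent to $\hat{\beta}(S/J_G)=1$. At $j=r=r_1+r_2$ the displayed sum collapses to the single term
$$\hat{\beta}(S/J_G)=\hat{\beta}(S_1/J_{G_1})\cdot \hat{\beta}(S_2/J_{G_2}),$$
and a product of positive integers equals $1$ iff both factors do, giving the equivalence. The main technical point, and essentially the only real work in the argument, is translating the definitions of level and pseudo-Gorenstein via the canonical module into these last-row Betti statements; once that dictionary is in place, the rest is positivity and support bookkeeping in the multiplicative formula.
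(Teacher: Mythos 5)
Your proposal is correct and follows essentially the same route as the paper: the Herzog--Rinaldo multiplicative formula for Betti numbers of decomposable graphs, restriction to the last homological position $p=p_1+p_2$, and a positivity/support argument for the level case together with the product formula $\hat{\beta}(S/J_G)=\hat{\beta}(S_1/J_{G_1})\hat{\beta}(S_2/J_{G_2})$ for the pseudo-Gorenstein case. The only cosmetic difference is that the paper cites the Cohen--Macaulay equivalence and the additivity of regularity from the literature rather than extracting them from the Betti polynomial factorization.
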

\begin{proof}
	Set $|V(G_1)|=n_1$ and $|V(G_2)|=n_2$. It follows from \cite[Theorem 2.7]{RR} that $S/J_G$ is Cohen-Macaulay if and only if $S_{1}/J_{G_1}$ and $S_{2}/J_{G_2}$ are Cohen-Macaulay. Suppose now they are all Cohen-Macaulay. Then by \cite[Corollary 3.4]{HHHKR} and Auslander-Buchsbaum formula, $\pd(S_{1}/J_{G_1})=n_1-1$, $\pd(S_{2}/J_{G_2})=n_2-1$ and $\pd(S/J_G)=n_1+n_2-2$. Let $\reg(S_{1}/J_{G_1})=r_1$ and $\reg(S_{2}/J_{G_2})=r_2$. So, by \cite[Theorem 3.1]{JNR1}, $\reg(S/J_G)=r=r_1+r_2$. Hence, $\hat{\beta}(S_1/J_{G_1})=\beta_{n_1-1,n_1-1+r_1}(S_1/J_{G_1})$, $\hat{\beta}(S_2/J_{G_2})=\beta_{n_2-1,n_2-1+r_2}(S_2/J_{G_2})$ and $\hat{\beta}(S/J_{G})=\beta_{n_1+n_2-2,n_1+n_2-2+r}(S/J_{G})$. Assume that $S_{1}/J_{G_1}$ and $S_{2}/J_{G_2}$ are level. Therefore, $\beta_{n_1-1,n_1-1+i}(S_{1}/J_{G_1})=\beta_{n_2-1,n_2-1+j}(S_{2}/J_{G_2})=0$ for $1\leq i\leq r_1-1$ and $1\leq j\leq r_2-1$. By \cite[Proposition 3]{HR-EJC}, $\hat{\beta}(S/J_{G})=\hat{\beta}(S_1/J_{G_1})\hat{\beta}(S_2/J_{G_2})$ and 
	
	$$\beta_{n_1+n_2-2,n_1+n_2-2+a}(S/J_G)=\underset{{\substack{i_1+i_2=n_1+n_2-2 \\ j_1+j_2=a}}}{\sum} \beta_{i_1,i_1+j_1}(S_{1}/J_{G_1})\beta_{i_2,i_2+j_2}(S_{2}/J_{G_2}) \text{ for } 1\leq a \leq r-1.$$
	
	Since $i_1\leq n_1-1$, $i_2\leq n_2-1$ and	$n_1+n_2-2=i_1+i_2$, we have $i_1=n_1-1$ and $i_2=n_2-1$. Now $a=j_1+j_2\leq r_1+r_2-1$ with $j_1\leq r_1$ and $j_2\leq r_2$ gives that $j_1\leq r_1-1$ or $j_2\leq r_2-1$. Therefore, $\beta_{i_1,i_1+j_1}(S_{1}/J_{G_1})=0$ or $\beta_{i_2,i_2+j_2}(S_{2}/J_{G_2})=0$, and so $\beta_{n_1+n_2-2,n_1+n_2-2+a}(S/J_G)=0$ for $1\leq a\leq r-1$.
	Hence, $S/J_G$ is level.
	
	Suppose now $S/J_G$ is level. If $S_{1}/J_{G_1}$ is not level, then there exists $1\leq i\leq r_1-1$ such that $\beta_{n_1-1,n_1-1+i}(S_{1}/J_{G_1})\neq 0$. This implies that 
	$$\beta_{n_1-1,n_1-1+i}(S_{1}/J_{G_1})\beta_{n_2-1,n_2-1+r_2}(S_{2}/J_{G_2})=\beta_{n_1+n_2-2,n_1+n_2-2+i+r_2}(S/J_G)\neq 0,$$
	which is a contradiction. Similarly, we can arrive at a contradiction if $S_{2}/J_{G_2}$ is not level. Hence, $S_{1}/J_{G_1}$ and $S_{2}/J_{G_2}$ are level.
	
	For the pseudo-Gorenstein part, by \cite[Corollary 4]{HR-EJC}, the unique extremal Betti number of $S/J_G$ is given by $\hat{\beta}(S/J_G)=\hat{\beta}(S_{1}/J_{G_1})\hat{\beta}(S_{2}/J_{G_2})$. Therefore, $\hat{\beta}(S/J_G)=1$ if and only if $\hat{\beta}(S_{1}/J_{G_1})=\hat{\beta}(S_{2}/J_{G_2})=1$. Hence, $S/J_G$ is pseudo-Gorenstein if and only if $S_{i}/J_{G_i}$ is pseudo-Gorenstein for $i=1,2$.
\end{proof}

\begin{corollary}\label{cor:level-pseudo-decomposable}
	Let $G$ be a decomposable graph into indecomposable subgraphs $G_1,\ldots,G_r$. Then $S/J_G$ is a level ring (respectively, pseudo-Gorenstein) if and only if $S_i/J_{G_i}$ is a level ring (respectively, pseudo-Gorenstein) for all $1\leq i\leq r$, where $S_{i}=\K[x_j,y_j:j\in V(G_i)]$.
\end{corollary}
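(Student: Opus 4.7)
The plan is to proceed by induction on $r$, the number of indecomposable components, reducing the statement to repeated applications of Proposition \ref{prop:decomposability-level-pseudo}.

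The base case $r=1$ is trivial, and $r=2$ is precisely Proposition \ref{prop:decomposability-level-pseudo}. For the inductive step with $r\geq 3$, the key observation is that an iterated decomposition of $G$ into indecomposable subgraphs $G_1,\ldots,G_r$ can always be realized by peeling off one component at a time. More precisely, I would first argue that one can relabel so that $G_r$ is a ``leaf'' in the block-tree of the decomposition, meaning that $G = G' \cup G_r$ with $V(G') \cap V(G_r) = \{v\}$, where $v$ is simplicial in both $G'$ and $G_r$, and $G'$ itself admits an indecomposable decomposition into $G_1,\ldots,G_{r-1}$. This follows from the standard fact that the decomposition of a graph into maximal indecomposable subgraphs is unique up to reordering (a consequence of the constructions in \cite{RR}), and the associated intersection pattern is a tree on the components.

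Once this is set up, one applies Proposition \ref{prop:decomposability-level-pseudo} to the pair $(G',G_r)$, getting that $S/J_G$ is level (resp.\ pseudo-Gorenstein) if and only if both $S'/J_{G'}$ and $S_r/J_{G_r}$ are level (resp.\ pseudo-Gorenstein), where $S' = \K[x_j,y_j : j \in V(G')]$. Then the induction hypothesis, applied to $G'$ with its $r-1$ indecomposable components $G_1,\ldots,G_{r-1}$, finishes the argument.

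The only real subtlety is the combinatorial lemma that produces the ``leaf'' component $G_r$; the algebraic content of the corollary is entirely contained in Proposition \ref{prop:decomposability-level-pseudo}. Since the existence of such a peeling order follows directly from the tree structure of iterated simplicial-vertex decompositions, there is no genuine obstacle, and the induction carries through uniformly in the level and pseudo-Gorenstein cases.
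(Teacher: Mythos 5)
Your induction on $r$ via repeated application of Proposition \ref{prop:decomposability-level-pseudo}, peeling off one indecomposable component at a time along the tree of the decomposition, is correct and is exactly the argument the paper leaves implicit (the corollary is stated there without proof, as an immediate iteration of the proposition, equivalently of the $r$-factor Betti polynomial product formula of \cite[Proposition 3]{HR-EJC}). The reassociation lemma you invoke to produce a leaf component is standard and unproblematic, so nothing is missing.
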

In conclusion, we get the following result.
\begin{corollary}\label{cor:CM-block-graph}
	Let $G$ be a Cohen-Macaulay block graph. Then $S/J_G$ is a level ring. Moreover, if $S/J_G$ is pseudo-Gorenstein, then $G$ must be a path.
\end{corollary}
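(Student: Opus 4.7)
The plan is to reduce to the indecomposable case via Corollary \ref{cor:level-pseudo-decomposable}, and then show that the indecomposable components of a Cohen-Macaulay block graph are precisely complete graphs. I would first invoke the Ene--Herzog--Hibi characterization that a block graph $G$ has $S/J_G$ Cohen-Macaulay exactly when every cut vertex of $G$ belongs to exactly two maximal cliques. Under this hypothesis, at each cut vertex $v$ we get a decomposition $G = G_1 \cup G_2$ with $V(G_1) \cap V(G_2) = \{v\}$ in which $v$ is simplicial in each side (it lies in exactly one maximal clique of $G_i$). Iterating, the indecomposable components $G_1, \ldots, G_r$ must be block graphs without cut vertices, hence complete graphs $K_{n_1}, \ldots, K_{n_r}$.

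The next step is to examine each $K_n$ on its own. The ideal $J_{K_n}$ is the ideal of $2 \times 2$ minors of a generic $2 \times n$ matrix, and its minimal free resolution is the Eagon--Northcott complex. This resolution is linear, so $\reg(S/J_{K_n}) = 1$ and every non-trivial Betti number of $S/J_{K_n}$ sits in the same row of the Betti table. Consequently $S/J_{K_n}$ is tautologically level, while the unique extremal Betti number is $\hat{\beta}(S/J_{K_n}) = \beta_{n-1,n}(S/J_{K_n}) = n-1$, which is read off the top of the Eagon--Northcott complex.

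With these two ingredients in hand, Corollary \ref{cor:level-pseudo-decomposable} immediately yields that $S/J_G$ is level, since every clique factor is. For the pseudo-Gorenstein assertion, the same corollary demands $n_i - 1 = 1$ for each $i$, so every block of $G$ must be a single edge. Thus $G$ is a tree, and invoking the classical fact that a tree has Cohen-Macaulay binomial edge ideal if and only if it is a path finishes the argument.

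The only real subtlety is pinning down the indecomposable pieces: one must use the Cohen-Macaulay hypothesis through Ene--Herzog--Hibi to guarantee that every cut vertex sits in exactly two blocks, since otherwise the graph could be indecomposable despite containing a cut vertex. Once this structural point is established, the rest is a direct assembly of the Eagon--Northcott Betti numbers with Corollary \ref{cor:level-pseudo-decomposable} and the known classification of Cohen-Macaulay trees.
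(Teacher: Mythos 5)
Your proof is correct and follows essentially the same route as the paper: decompose $G$ into complete indecomposable blocks, apply Corollary \ref{cor:level-pseudo-decomposable}, and force each block to be $K_2$ in the pseudo-Gorenstein case. The only difference is cosmetic — you compute $\hat{\beta}(S/J_{K_n})=n-1$ directly from the Eagon--Northcott complex, whereas the paper cites a result of Herzog--Rinaldo for the same conclusion.
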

\begin{proof}
	Note that a Cohen-Macaulay block graph is a decomposable graph where indecomposable subgraphs are complete graphs. Since binomial edge ideals of complete graphs are level, by Corollary \ref{cor:level-pseudo-decomposable}, $S/J_G$ is also level. 
	
	Let $S/J_G$ be pseudo-Gorenstein. Since $\hat{\beta}(S/J_G)=1$, it follows from \cite[Theorem 6]{HR-EJC} that each indecomposable subgraph of $G$ must be $K_2$, and hence $G=P_n$.
\end{proof}

Now we study the levelness and pseudo-Gorensteinness for the cone graph. Let $v\notin V(G)$. Then the \textit{cone} of $v$ on $G$ is the graph with the vertex set $V(G)\cup \{v\}$ and the edge set $E(G)\cup \{\{u,v\}:u\in V(G)\}$, and it is denoted by $\cone(v,G)$. Let $G=\cone(v,H_1\sqcup H_2)$ on $n$ vertices with $H_1$ and $H_2$ connected graphs. Then it is proved that if $J_{H_1}$ and $J_{H_2}$ are Cohen-Macaulay, then $J_G$ is also Cohen-Macaulay, \cite[Theorem 3.8]{RR}. The converse has recently been proved in \cite[Theorem 4.8]{DAV2}.
\begin{proposition}\label{cone-level-pseudo}
	Let $G$ be the $\cone(v,H_1\sqcup H_2)$, where $H_1$ and $H_2$ are connected Cohen-Macaulay graphs.  Then
	\begin{enumerate}
		\item $S/J_G$ is level if and only if $\reg (S/J_G)=2$.
		\item if $\reg(S/J_G)>2$, then $S/J_G$ is pseudo-Gorenstein if and only if $S_{H_i}/J_{H_i}$ is pseudo-Gorenstein for $i=1,2$.
	\end{enumerate}
\end{proposition}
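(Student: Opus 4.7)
Let $n_i = |V(H_i)|$, $p := \pd(S/J_G)$, $r := \reg(S/J_G)$, and $r_i := \reg(S_{H_i}/J_{H_i})$. By \cite[Theorem 3]{RR}, $S/J_G$ is Cohen-Macaulay, hence $p = n_1 + n_2$ and $\hat{\beta}(S/J_G) = \beta_{p, p+r}(S/J_G)$ is the unique extremal Betti number. Both parts of the proposition reduce to controlling the top-row Betti numbers $\beta_{p, p+j}(S/J_G)$ for $1 \le j \le r$. My strategy is to decompose $G = G_1 \cup G_2$ with $G_i := \cone(v, H_i)$ and $V(G_1) \cap V(G_2) = \{v\}$, and exploit the Mayer--Vietoris short exact sequence
\begin{align*}
0 \to S/(J_{G_1} \cap J_{G_2}) \to S/J_{G_1} \oplus S/J_{G_2} \to S/J_G \to 0
\end{align*}
together with the induced long exact sequence in $\Tor$, thereby expressing the top Betti numbers of $S/J_G$ in terms of those of the simpler ingredients. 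Note that $v$ is typically not simplicial in $G_i$, so $G$ is not decomposable in the sense of Proposition~\ref{prop:decomposability-level-pseudo} and the product formula of \cite[Proposition 3]{HR-EJC} does not apply directly; nevertheless each $G_i$ is Cohen-Macaulay, which keeps the Tor analysis tractable.

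For part (1), observe first that $r \ge 2$ always, since $G$ contains an induced $P_3$ through $v$ (one vertex from each of $V(H_1)$, $V(H_2)$). I would prove the forward direction by contrapositive: if $r \ge 3$, I would produce a nonzero $\beta_{p, p+2}(S/J_G)$ via Koszul-type syzygies on ``mixed'' pairs $\{a, b\}$ with $a \in V(H_1)$, $b \in V(H_2)$ sharing the common neighbor $v$ but non-adjacent in $G$. This yields a non-extremal top Betti at $(p, p+2)$ distinct from the extremal one at $(p, p+r)$, violating levelness. Conversely, if $r = 2$, then $r_1, r_2 \le 2$, and the Tor sequence forces $\beta_{p, p+1}(S/J_G) = 0$, leaving $\beta_{p, p+2}$ as the only nonzero top Betti number and making $S/J_G$ level.

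For part (2), assume $r > 2$. The same Tor analysis at the extremal position yields a cone analog of the multiplicativity formula \cite[Corollary 4]{HR-EJC}:
\begin{align*}
\hat{\beta}(S/J_G) = \hat{\beta}(S_{H_1}/J_{H_1}) \cdot \hat{\beta}(S_{H_2}/J_{H_2}).
\end{align*}
Since $\hat{\beta} \ge 1$ for any Cohen-Macaulay graded algebra, the product equals $1$ iff both factors are $1$, which is precisely the pseudo-Gorenstein condition on each $H_i$.

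The main obstacle is deriving these Tor-theoretic formulas rigorously: in particular, the multiplicativity of the extremal Betti number and the vanishing of $\beta_{p, p+1}(S/J_G)$ when $r = 2$. Naive Mayer--Vietoris bookkeeping gives additive contributions in many positions, and multiplicativity at the extremal position emerges only after identifying precisely how the Betti numbers of $S/(J_{G_1} \cap J_{G_2})$ cancel through the connecting homomorphism. This in turn requires a careful description of the intersection ideal $J_{G_1} \cap J_{G_2}$, which contains both the star binomials at $v$ and certain products of $H_i$-binomials and thus is more delicate than in the decomposable case.
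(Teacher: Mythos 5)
There is a genuine gap: all three load-bearing claims in your sketch --- that $\beta_{p,p+2}(S/J_G)\neq 0$ when $r>2$, that $\beta_{p,p+1}(S/J_G)=0$ when $r=2$, and that $\hat{\beta}(S/J_G)=\hat{\beta}(S_{H_1}/J_{H_1})\hat{\beta}(S_{H_2}/J_{H_2})$ --- are asserted rather than proved, and the mechanism you propose for proving them does not work as stated. Your Mayer--Vietoris sequence for $G_1=\cone(v,H_1)$, $G_2=\cone(v,H_2)$ requires a description of $J_{G_1}\cap J_{G_2}$, and (as you concede) no such description is available; since $v$ is not simplicial in the $G_i$, none of the known intersection formulas apply, and the ``careful description'' you defer is precisely the hard part. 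Likewise, exhibiting Koszul syzygies on mixed non-adjacent pairs $\{a,b\}$ with common neighbour $v$ produces nonzero Betti numbers in \emph{low} homological degree; it gives no control whatsoever over $\beta_{p,p+2}$ at the top homological degree $p=n-1$, which is what levelness is about. So the forward direction of (1) is not established, and neither is the multiplicativity in (2).

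For comparison, the paper gets all three facts essentially for free from the literature: \cite[Theorem 3.2]{RM-Ext} gives $\beta_{p,p+2}(S/J_G)=n-2$ for a cone with $\reg(S/J_G)>2$ and the equivalence $\hat{\beta}(S/J_G)=1\iff\hat{\beta}(S_{H_i}/J_{H_i})=1$; and the linear strand is controlled by \cite[Corollary 4.3]{HKS}, $\beta_{i,i+1}(S/J_G)=if_i(\Delta(G))$, so $\beta_{n-1,n}(S/J_G)=0$ because $G$ is not complete. If you want a self-contained argument, the right decomposition is not yours but Ohtani's: $v$ is a non-simplicial cut point of the cone, so $J_G=(J_{H_1\sqcup H_2}+(x_v,y_v))\cap J_{K_n}$, yielding the short exact sequence \eqref{ohtani-ses} with $G\setminus v=H_1\sqcup H_2$, $G_v=K_n$, $G_v\setminus v=K_{n-1}$. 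All three auxiliary modules are understood (complete graphs have regularity $1$; the Betti numbers of $J_{H_1\sqcup H_2}$ are multiplicative by the K\"unneth formula), and chasing the resulting $\Tor$ long exact sequence in homological degree $n-1$ recovers both the value $n-2$ at $(p,p+2)$ and the product formula at $(p,p+r)$. I recommend either citing \cite{RM-Ext} and \cite{HKS} as the paper does, or redoing your Tor analysis with the Ohtani sequence in place of your intersection of the two sub-cones.
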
 
\begin{proof}
	$(1).$ If $\reg(S/J_G)>2$, then thanks to \cite[Proposition 3.2]{RM-Ext}, we have $\beta_{p,p+2}(S/J_G)=n-2,$ where $p=\pd(S/J_G)$. Therefore, $S/J_G$ is not level. Hence, if $S/J_G$ is level, then $\reg(S/J_G)=2$. Conversely, let $\reg(S/J_G)=2$. It follows from \cite[Corollary 4.3]{HKS} that \begin{equation}\label{linear-strand}
		\beta_{i,i+1} (S/J_G) = i f_{i}(\Delta(G)),
	\end{equation}
	where $\Delta(G)$ is the clique complex of $G$ and $f_{i}(\Delta(G))$ is the number of faces of $\Delta(G)$ of dimension $i$. Therefore, $\beta_{n-1,n}(S/J_{G})=0$ as $G$ is not complete. Hence $S/J_G$ is level.
	
	$(2).$ 
	It follows from \cite[Proposition 3.2]{RM-Ext} that if $\reg(S/J_G)> 2$, then $\hat{\beta}(S/J_G)=1$ if and only if $\hat{\beta}(S_{H_i}/J_{H_i})=1$ for $i=1,2$. Hence, $S/J_G$ is pseudo-Gorenstein if and only if $S_{H_i}/J_{H_i}$ is pseudo-Gorenstein for $i=1,2$.
\end{proof}

In the next section, we show that if $S/J_G$ is level with $\reg(S/J_G)=2$, then $G$ necessarily be a cone, and we characterize all such graphs in Corollary \ref{cor:level-reg=2}.

With the help of Proposition \ref{cone-level-pseudo}, we can construct an infinite family of pseudo-Gorenstein rings with arbitrary regularity greater than 2.
\begin{remark}
	Let $r>2$. For chosen $n_1$ and $n_2$, we can easily construct $H=P_{n_1}\sqcup P_{n_2}$ such that $\reg(S_H/J_H)=n_1+n_2-2=r$. Let $G=\cone(v,H)$. Then by \cite[Theorem 2.1]{KM-JA}, $\reg(S/J_G)=r$ and hence, it follows from Proposition \ref{cone-level-pseudo} that $S/J_G$ is pseudo-Gorenstein.
\end{remark}

It is natural to ask, under the condition that $S/J_G$ is pseudo-Gorenstein with $\reg(S/J_G)>2$, whether $G$ is a cone. For $\reg(S/J_G)=3$, we have an affirmative answer in Section \ref{sec:regularity3} (See Theorem \ref{thm:pseudo-Gorenstein-with-regularity3}). This is not true when the regularity is greater than $3$ by the examples on $7$ vertices provided in Section \ref{sec: computation}.

\section{Level with regularity $2$}\label{sec:regularity2}
For the aim of  the section we need the definition of the join product (or simply join) of two graphs. Let $G_i$ be a graph with the vertex set $[n_i]$ for $i=1,2$. Then the \textit{join product} of $G_1$ and $G_2$, denoted by $G_1*G_2$, is the graph with the vertex set $[n_1]\sqcup [n_2]$ and the edge set $E(G_1*G_2)=E(G_1)\cup E(G_2)\cup \{\{i,j\}:i\in [n_1] \text{ and } j\in [n_2]\}$. Observe that the cone of $v$ on $G$ is the join graph $v*G$. For a graph $G$ on $[n]$, $G$ is called a \textit{$k$-vertex-connected} if $k<n$ and for any subset $A\subseteq [n]$ with $|A|<k$, the induced subgraph on the vertex set $[n]\setminus A$ is connected.

We recall the following result due to Saeedi Madani and Kiani, \cite[Theorem 3.2]{KM-JA}
\begin{theorem}\label{KM-main-thm}
	Let $G$ be a non-complete graph with $n$ vertices and no isolated vertices.
	Then $\reg(S/J_G )=2$ if and only if either
	\begin{enumerate}
		\item $G=K_r \sqcup K_s$ for $r,s\geq 2$ and $r+s=n$, or
		\item $G=G_1*G_2$, where $G_i$ is a graph with $n_i<n$ vertices such that $n_1+n_2=n$ and $\reg(S_{G_i}/J_{G_i})\leq 2$ for $i=1,2$.
	\end{enumerate}
\end{theorem}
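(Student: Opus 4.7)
The plan is to prove the two directions separately, with the backward direction following from standard regularity formulas and the forward direction requiring a structural reduction to cographs.

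For the backward direction, in case (1), the ideal $J_{K_r \sqcup K_s}$ splits as a sum of two ideals living in disjoint variable sets. Since each $S_i/J_{K_{n_i}}$ has regularity $1$ (complete graphs give a $2$-linear resolution of $J_{K_{n_i}}$), the K\"unneth-type additivity of regularity for ideals in disjoint variables yields $\reg(S/J_G) = 1+1 = 2$. In case (2), since $G$ is non-complete, $\reg(S/J_G) \geq 2$; the reverse inequality follows from a join-regularity bound of the form $\reg(S/J_{G_1 * G_2}) \leq \max\{\reg(S_{G_1}/J_{G_1}),\reg(S_{G_2}/J_{G_2}),2\}$, which equals $2$ under the hypothesis.

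For the forward direction, the key input is the Matsuda--Murai induced-subgraph bound: if $H$ is an induced subgraph of $G$, then $\reg(S_H/J_H) \leq \reg(S/J_G)$. Combined with $\reg(S/J_{P_m}) = m-1$, the assumption $\reg(S/J_G) = 2$ forces $G$ to contain no induced path on four vertices, i.e., $G$ is a cograph. By the classical structural theorem for cographs, any cograph on at least two vertices is either the disjoint union of two cographs or the join of two cographs, so it suffices to analyze these two cases.

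If $G$ is disconnected with connected components $H_1,\ldots,H_k$, the additivity of regularity for disjoint unions gives $\reg(S/J_G) = \sum_i \reg(S_i/J_{H_i}) = 2$, which forces $k=2$ and $\reg(S_i/J_{H_i})=1$ for each $i$; the latter characterizes complete graphs, yielding case (1). If instead $G$ is connected, the cograph structure gives $G = G_1 * G_2$ with $n_1,n_2 < n$, and each $G_i$ is an induced subgraph of $G$, so induced-subgraph monotonicity gives $\reg(S_{G_i}/J_{G_i}) \leq 2$, placing us in case (2). The main subtlety is locating and invoking the precise join-regularity inequality and the induced-subgraph monotonicity for binomial edge ideals; once these tools are in place, the cograph decomposition does the combinatorial work essentially for free.
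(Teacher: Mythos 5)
Your proof is correct, but note that the paper does not prove this statement at all: it is quoted verbatim as a recalled result of Saeedi Madani and Kiani (\cite[Theorem 3.2]{KM-JA}), so there is no in-paper argument to compare against. Your reconstruction is essentially the standard proof from the source: the Matsuda--Murai induced-subgraph monotonicity together with $\reg(S/J_{P_4})=3$ forces $G$ to be $P_4$-free, the complement-reducibility of cographs splits the analysis into the disjoint-union and join cases, and the two regularity formulas (additivity over disjoint unions, and the Kiani--Saeedi Madani join formula $\reg(S/J_{G_1*G_2})=\max\{\reg(S_{G_1}/J_{G_1}),\reg(S_{G_2}/J_{G_2}),2\}$ for a non-complete join) finish both directions. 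All the external inputs you invoke do exist in the literature, including the fact that $\reg(S/J_H)=1$ exactly when $H$ is complete, which you need (together with the no-isolated-vertices hypothesis, which rules out a regularity-$0$ component) to conclude that a disconnected $G$ must be $K_r\sqcup K_s$ with $r,s\geq 2$. The only point worth tightening is the join bound: you phrase it as an inequality you hope to "locate," but it is in fact an equality proved by Kiani and Saeedi Madani, valid whenever the join is not complete; citing it in that form makes the backward direction immediate and also covers degenerate cases such as a factor $G_i$ that is edgeless or a single vertex.
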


First, we construct some graphs in an inductive way:
\begin{cons}\label{construction-CM-2}
	Let $H_0=K_r\sqcup K_s$ with $1\leq r\leq s$. Let $H_1=K_1*H_0$. For $i\geq 2$, let $H_i=K_1*(K_1\sqcup H_{i-1})$ for $i\geq 2$.
\end{cons}
Let us illustrate our construction with some examples:
\begin{example}\label{exa:levelcones}
	We use the notation that the edges $K_{r+1}$ and $K_{s+1}$ of the graph $H_1$ in Figure 1 represent the corresponding complete graphs that have a unique vertex in common. 
	\begin{figure}[H]
		\centering
		\begin{tikzpicture}[scale=1]
			\draw  (-4,1)-- (-4,0);
			\draw  (-4,0)-- (-4,-1);
			\draw  (-2,0)-- (-1,0);
			\draw [dotted]  (-1,0)-- (0,1);
			\draw [dotted] (-1,0)-- (0,0);
			\draw [dotted] (-1,0)-- (0,-1);
			\draw  (0,1)-- (0,0);
			\draw  (0,0)-- (0,-1);
			\draw  (2,0)-- (3,0);
			\draw  (3,0)-- (4,1);
			\draw  (3,0)-- (4,0);
			\draw  (3,0)-- (4,-1);
			\draw  (4,1)-- (4,0);
			\draw  (4,0)-- (4,-1);
			\draw  (3,1)-- (2,1);
			\draw [dotted] (3,1)-- (2,0);
			\draw [dotted] (3,1)-- (3,0);
			\draw [dotted] (3,1)-- (4,1);
			\draw [dotted] (3,1)-- (4,0);
			\draw [dotted] (3,1)-- (4,-1);
			\begin{scriptsize}
				\fill  (-4,1) circle (2.5pt);
				\fill  (-4,0) circle (2.5pt);
				\fill  (-4,-1) circle (2.5pt);
				\fill  (-2,0) circle (2.5pt);
				\fill  (-1,0) circle (2.5pt);
				\fill  (0,1) circle (2.5pt);
				\fill  (0,0) circle (2.5pt);
				\fill  (0,-1) circle (2.5pt);
				\fill  (2,0) circle (2.5pt);
				\fill (3,0) circle (2.5pt);
				\fill  (4,1) circle (2.5pt);
				\fill  (4,0) circle (2.5pt);
				\fill  (4,-1) circle (2.5pt);
				\fill  (3,1) circle (2.5pt);
				\fill (3.16,1.43) node {$v_2$};
				\node at (2.16,1.43) {$u_2$};
				\fill (2,1) circle (2.5pt);
				\node at (-1,0.4) {$v_1$};
				\node at (-2,0.4) {$u_1$};
				\node at (-4.4,0.5) {$K_{r+1}$};
				\node at (-4.4,-0.5) {$K_{s+1}$};
				\node at (0.4,0.5) {$K_{r+1}$};
				\node at (0.4,-0.5) {$K_{s+1}$};
				\node at (4.4,0.5) {$K_{r+1}$};
				\node at (4.4,-0.5) {$K_{s+1}$};
				\node at (-4,-1.5) {$H_1$};
				\node at (-1,-1.5) {$H_2=v_1*(u_1 \sqcup H_1)$};
				\node at (3,-1.5) {$H_3=v_2*(u_2\sqcup H_2)$};
				
			\end{scriptsize}
		\end{tikzpicture}
		\caption{Level graphs of regularity $2$.}
	\end{figure}
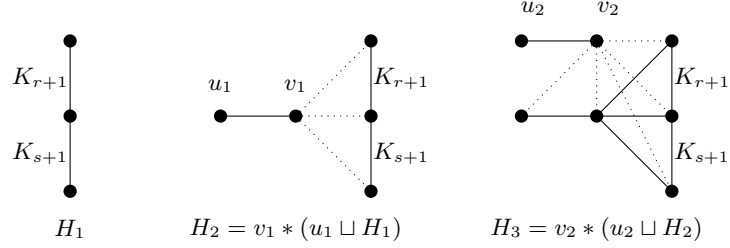
	
\end{example}

We need the following lemma in the proof of Theorem \ref{CM-reg=2}.
\begin{lemma}\label{tech-lemma}
	Let $G=K_1*(K_1\sqcup G')$ with $|V(G)|=n$. Then $J_G$ is Cohen-Macaulay if and only if $J_{G'}$ is a Cohen-Macaulay.
\end{lemma}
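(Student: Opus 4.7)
The plan is to read off $G = K_1 * (K_1 \sqcup G')$ as a cone: writing $v$ for the outer $K_1$ (which becomes a universal vertex of $G$) and $u$ for the inner $K_1$, one has $G = \cone(v, K_1 \sqcup G')$ with the base $K_1 \sqcup G'$ splitting into the two connected pieces $\{u\}$ and $G'$. I proceed under the standing assumption that $G'$ is connected, which is the case in every use of the lemma: in Construction \ref{construction-CM-2} the relevant $G'$ is some $H_{i-1}$ that is itself a cone, hence connected.

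At that point both implications follow immediately from the cone theorems quoted just before Proposition \ref{cone-level-pseudo}. The direction ``$J_{G'}$ Cohen-Macaulay $\Rightarrow$ $J_G$ Cohen-Macaulay'' is \cite[Theorem 3]{RR} applied with $H_1 = K_1$ and $H_2 = G'$, using the fact that $J_{K_1}$ is the zero ideal in the polynomial ring $\K[x_u, y_u]$ and is therefore automatically Cohen-Macaulay. The converse ``$J_G$ Cohen-Macaulay $\Rightarrow$ $J_{G'}$ Cohen-Macaulay'' is then \cite[Theorem 4.8]{DAV2}, which says that if the cone over a disjoint union of two connected graphs is Cohen-Macaulay, then each summand already is; specializing that result again to $H_1 = K_1$ and $H_2 = G'$ collapses the two-part conclusion to just the Cohen-Macaulayness of $J_{G'}$.

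There is essentially no obstacle here; the only delicate point is the connectedness of $G'$, required to fit the two-component hypothesis of the cone theorems. Since in the intended inductive application $G'$ is itself built as a cone, connectedness is automatic, and the proof reduces to a one-line invocation of the two cited theorems combined with the trivial Cohen-Macaulayness of $J_{K_1}$.
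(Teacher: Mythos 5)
Your proof is correct and follows essentially the same route as the paper: the forward implication via the cone construction theorem of Rauf–Rinaldo and the converse via \cite[Theorem 4.8]{DAV2}, both specialized to $H_1=K_1$ and $H_2=G'$. Your explicit remark that $G'$ must be connected for the two-component cone theorems to apply is a fair observation — the paper leaves this hypothesis implicit, but it is satisfied in every application of the lemma, exactly as you note.
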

\begin{proof}
	It follows from \cite[Theorem 3.8]{RR} that if $J_{G'}$ is Cohen-Macaulay, then $J_G$ is Cohen-Macaulay. Also, if $J_G$ is Cohen-Macaulay, then by \cite[Theorem 4.8(4)]{DAV2}, we get that $J_{G'}$ is Cohen-Macaulay.
\end{proof}

Now we characterize Cohen-Macaulay $J_G$ with $\reg(S/J_G)=2$. This is an improvement of the result of \cite[Proposition 3.4]{KM-JA}. 
\begin{theorem}\label{CM-reg=2}
	Let $G$ be a connected graph on the vertex set $[n]$. Then the following conditions are equivalent:
	\begin{enumerate}
		\item $S/J_G$ is Cohen–Macaulay and $\reg(S/J_G)=2$;
		\item $G=H_i$ for some $i\geq 1$, where $H_i$'s are defined in Construction \ref{construction-CM-2}.
	\end{enumerate}
\end{theorem}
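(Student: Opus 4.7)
For (2) $\Rightarrow$ (1), I argue by induction on $i \geq 1$. The base case $H_1 = K_1 * (K_r \sqcup K_s)$ is Cohen-Macaulay by the cone construction \cite[Theorem 3]{RR} applied to the two Cohen-Macaulay components $K_r$ and $K_s$; its regularity equals $2$ by Theorem \ref{KM-main-thm}, since $H_1$ is non-complete with no isolated vertices and decomposes as a join $K_1 * (K_r \sqcup K_s)$ in which both factors have regularity at most $2$. For $i \geq 2$, the inductive hypothesis gives that $H_{i-1}$ is Cohen-Macaulay, so Lemma \ref{tech-lemma} yields that $H_i = K_1 * (K_1 \sqcup H_{i-1})$ is Cohen-Macaulay; its regularity equals $2$ again by Theorem \ref{KM-main-thm}, using that adjoining an isolated vertex to $H_{i-1}$ leaves the regularity unchanged.

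For (1) $\Rightarrow$ (2), I induct on $n = |V(G)|$. Since $G$ is connected with $\reg(S/J_G) = 2$, Theorem \ref{KM-main-thm} yields a join decomposition $G = G_1 * G_2$ with $\reg(S_{G_i}/J_{G_i}) \leq 2$. The crux is to show that, up to swapping the factors, $G_1 = K_1$ (so that $G$ is a cone over $G_2$) and $G_2$ splits into exactly two connected components $H_1'$ and $H_2'$; at that point, the converse of the cone construction \cite[Theorem 4.8]{DAV2} guarantees that both $H_i'$ are Cohen-Macaulay.

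Establishing this cone structure is the principal technical obstacle. My plan is a minimal-prime analysis: since $J_G$ is Cohen-Macaulay hence unmixed, every minimal prime $P_T$ of $J_G$ must share the height $n-1$, and the standard formula $\h(P_T) = |T| + n - c(G \setminus T)$ translates this requirement into $c(G \setminus T) = |T| + 1$ for every admissible $T \subseteq V(G)$. If $|V(G_1)|, |V(G_2)| \geq 2$, one exhibits an admissible $T$ violating this equality by exploiting non-adjacent vertices in a non-complete factor together with vertices of the other factor, producing a contradiction; hence one of the factors must be $K_1$. Once $G_1 = K_1$ and $G = \cone(v, G_2)$, the condition $\reg(S/J_G) = 2 \neq 1$ rules out $G$ being complete, and applying the same height equality to $T = \{v\}$ then forces $G_2$ to be disconnected with exactly two connected components.

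With $G = K_1 * (H_1' \sqcup H_2')$ and $H_1', H_2'$ connected Cohen-Macaulay, the inductive hypothesis finishes the argument. Since regularity is additive over disjoint unions of graphs on disjoint vertex sets, $\reg(S_{H_1'}/J_{H_1'}) + \reg(S_{H_2'}/J_{H_2'}) \leq 2$. If both summands are at most $1$, then both $H_i'$ are complete graphs, giving $G = K_1 * (K_r \sqcup K_s) = H_1$ in Construction \ref{construction-CM-2}. Otherwise one summand is $0$ (so the corresponding $H_i'$ is $K_1$) and the other equals $2$, in which case the inductive hypothesis identifies the other component as $H_j$ for some $j \geq 1$, whence $G = K_1 * (K_1 \sqcup H_j) = H_{j+1}$, completing the induction.
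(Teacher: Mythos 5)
Your $(2)\Rightarrow(1)$ direction is essentially the paper's argument (the paper gets Cohen--Macaulayness of $H_1$ from its decomposition into $K_{r+1}\cup K_{s+1}$ rather than from the cone theorem, but both are valid), and the skeleton of your $(1)\Rightarrow(2)$ direction --- join decomposition via Theorem \ref{KM-main-thm}, reduce to a cone over exactly two components, invoke the converse cone theorem and Lemma \ref{tech-lemma}, then induct --- is exactly the paper's. The final case analysis on the regularities of the two components is also correct.

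However, the step you yourself flag as ``the principal technical obstacle'' is not actually carried out, and the outline you give for it has a genuine flaw. First, the claim that $n_1,n_2\geq 2$ forces non-unmixedness is left as a recipe (``one exhibits an admissible $T$\dots''), but producing such a $T$ is delicate: for $T$ to correspond to a minimal prime, every $w\in T$ must be a cut vertex of $G\setminus(T\setminus\{w\})$, and the obvious candidates (e.g.\ all vertices outside a non-adjacent pair of the non-complete factor) fail this test in general, so an explicit construction is required. Second, and more seriously, your argument that ``applying the same height equality to $T=\{v\}$ forces $G_2$ to be disconnected'' is vacuous in the case it is meant to exclude: if $G_2$ is connected then $v$ is not a cut vertex, $\{v\}$ is not a cut set of $J_G$, and the height equality imposes no condition at all. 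The equality $c(G\setminus\{v\})=2$ only tells you something \emph{after} you already know $G_2$ is disconnected (namely, that it has exactly two components); it cannot be used to prove disconnectedness. The paper sidesteps both issues at once with a connectivity argument: $G=G_1*G_2$ is $\min(n_1,n_2)$-vertex-connected, and $K_1*G_2$ with $G_2$ connected is $2$-vertex-connected, so \cite[Theorem 3.10]{BN17} (a depth bound in terms of vertex connectivity) is incompatible with $S/J_G$ being Cohen--Macaulay and non-complete. You need either that citation or a complete unmixedness argument covering both the ``two large factors'' case and the ``$K_1*(\text{connected})$'' case; as written, neither is established.
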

\begin{proof}
	$(2)\implies (1)$
	Note that $H_1$ is a decomposable graph with decomposition $H_1=K_{r+1}\cup K_{s+1}$. By \cite[Theorem 1.1]{EHH-NMJ}, $J_{H_1}$ is Cohen-Macaulay, and hence by applying repeatedly Lemma \ref{tech-lemma}, we can conclude that $J_{H_i}$ is Cohen-Macaulay for $i\geq 2$. Since for any complete graph $K_t$, by \cite[Theorem 2.1]{KM-EJC} $\reg(S_{K_t}/J_{K_t})=1$. Therefore, it follows from \cite[Theorem 3.1]{JNR1} that $\reg(S_{H_1}/J_{H_1})=2$. Hence if $G=H_i$ for $i\geq 2$, then by \cite[Theorem 2.1]{KM-JA}, $\reg(S/J_G)=2$.
	
	$(1)\implies (2)$ 
	Let $G$ be a connected non-complete graph. Then by Theorem \ref{KM-main-thm}, $G$ can be written as $G=G_1*G_2$ where $G_i$ is on $n_i$ vertices and $\reg(S_{G_i}/J_{G_i})\leq 2$ for $i=1,2$ such that $n_1+n_2=n$. We may assume that $n_1\leq n_2$. Then $G$ is $n_1$-vertex-connected and hence by \cite[Proposition 3.10]{BN17}, $n_1=1$ as $S/J_G$ is Cohen-Macaulay. Moreover, if $G_2$ is connected, then $G=K_1*G_2$ is $2$-vertex connected graph. Therefore, $G_2$ is disconnected with $n_2=n-1\geq 2$. We now proceed by induction on $n$. Suppose now $n=3$. Then $n_2=2$ and so $G_2$ is a union of two isolated vertices. Hence, from our construction, we have $G=H_1$ with $r=s=1$. Assume that $n\geq 4$. By \cite[Theorem 2.1]{KM-JA}, $\reg(S_{G_2}/J_{G_2})\leq 2$. First assume that $\reg(S_{G_2}/J_{G_2})=1$. Then $G_2$ can be written as $G_2=K_r^c\sqcup K_s$ for $r\geq 1,s\geq 2$, where $K_r^c$ represents a set of $r$ isolated vertices. This implies that $G$ is a block graph where $r$-copies of $K_2$ and $K_{s+1}$ intersect at $K_1$. Therefore, thanks to \cite[Theorem 1.1]{EHH-NMJ}, we get $r=1$, and hence $G=H_1$ with $r=1,s\geq 2$. 
	
	Suppose now $\reg(S_{G_2}/J_{G_2})=2$. Then $G_2$ can have at most two non-trivial connected components. If $G_2$ has two non-trivial connected components, then we can write $G_2=K_r\sqcup K_s\sqcup K_t^c$ with $2\leq r\leq s$ and $t\geq 0$. Then similarly, $G$ is a block graph where $K_{r+1},K_{s+1}$ and $t$-copies of $K_2$ intersect at $K_1$, and hence $t=0$. So, $G=K_1*(K_r\sqcup K_S)=H_1$ with $2\leq r\leq s$. 
	
	Assume that $G_2$ has only one non-trivial connected component, say $G_2'$. Since $G_2$ is disconnected and $S/J_G$ is Cohen-Macaulay, $G_2$ has one isolated vertex. Then by Lemma \ref{tech-lemma}, $S_{G_2'}/J_{G_2'}$ is Cohen-Macaulay. So, by inductive hypothesis, we obtain $G_2'=H_i$ for some $i$, and hence, $G=H_{i+1}$.
	This completes the proof.
\end{proof}

\begin{corollary}\label{cor:level-reg=2}
	Let $G$ be a connected graph on the vertex set $[n]$. Then the following conditions are equivalent:
	\begin{enumerate}
		\item $S/J_G$ is level and $\reg(S/J_G)=2$;
		\item $G=H_i$ for some $i\geq 1$, where $H_i$'s are defined in Construction \ref{construction-CM-2}.
	\end{enumerate}
\end{corollary}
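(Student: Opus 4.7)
The plan is to deduce this corollary directly from Theorem \ref{CM-reg=2} together with Proposition \ref{cone-level-pseudo}(1), since all the real combinatorial work has already been carried out.

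For the implication $(1)\Rightarrow(2)$, I would observe that a level ring is by definition Cohen--Macaulay, so the hypothesis that $S/J_G$ is level with $\reg(S/J_G)=2$ already places $G$ in the setting of Theorem \ref{CM-reg=2}, which immediately forces $G=H_i$ for some $i\geq 1$.

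For the implication $(2)\Rightarrow(1)$, I would check that every graph $H_i$ of Construction \ref{construction-CM-2} is a cone on two connected Cohen--Macaulay components. Concretely: when $i=1$ we have $H_1=K_1\ast(K_r\sqcup K_s)$ with $K_r,K_s$ connected (and Cohen--Macaulay since binomial edge ideals of complete graphs are even Gorenstein up to regularity); when $i\geq 2$ we have $H_i=K_1\ast(K_1\sqcup H_{i-1})$ with $K_1$ and $H_{i-1}$ both connected and Cohen--Macaulay by the $(2)\Rightarrow(1)$ direction of Theorem \ref{CM-reg=2}. In both cases $H_i$ has the form $\cone(v,A\sqcup B)$ with $A,B$ connected Cohen--Macaulay graphs, and $\reg(S/J_{H_i})=2$ again by Theorem \ref{CM-reg=2}. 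Therefore Proposition \ref{cone-level-pseudo}(1) applies and yields that $S/J_{H_i}$ is level.

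There is really no obstacle beyond correctly invoking the two results already proved: the classification of Cohen--Macaulay regularity-$2$ binomial edge ideals as exactly the graphs $H_i$, and the equivalence between levelness and having regularity $2$ for such cones. The only thing to keep straight is that one must recognize $H_1$ itself as a cone on two connected components (not just as the decomposable graph $K_{r+1}\cup K_{s+1}$), so that Proposition \ref{cone-level-pseudo}(1) can be applied uniformly in the base case and the inductive cases.
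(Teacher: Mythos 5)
Your proof is correct and essentially the same as the paper's: the $(1)\Rightarrow(2)$ direction is identical (level implies Cohen--Macaulay, then invoke Theorem \ref{CM-reg=2}), and for $(2)\Rightarrow(1)$ you route through Proposition \ref{cone-level-pseudo}(1) after observing that each $H_i$ is a cone over two connected Cohen--Macaulay components, whereas the paper cites the linear-strand formula \eqref{linear-strand} directly --- but since that formula is exactly what proves the converse direction of Proposition \ref{cone-level-pseudo}(1), this is the same computation repackaged. Your remark that one must view $H_1$ as $\cone(v,K_r\sqcup K_s)$ rather than as the decomposition $K_{r+1}\cup K_{s+1}$ is a correct and worthwhile point of care.
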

\begin{proof}
	$(1)\implies (2)$ It follows from Theorem \ref{CM-reg=2}.	$(2)\implies (1)$ If $G=H_i$, then by Theorem \ref{CM-reg=2}, $S/J_G$ is Cohen-Macaulay. Moreover, by the equation \eqref{linear-strand}, $\beta_{p,p+1}(S/J_G)=0$, where $p=\pd(S/J_G)$. Since $\reg(S/J_G)=2$, $S/J_G$ is level. 
\end{proof}

\begin{remark}\label{rmk:reg<=2}
	Let $G$ be a Cohen-Macaulay graph with $\reg(S/J_G)\leq 2$. Then by Corollaries \ref{cor:CM-block-graph} and \ref{cor:level-reg=2}, $S/J_G$ is level. Hence, $S/J_G$ is pseudo-Gorenstein if and only if $G$ is a path.
\end{remark}
\section{Pseudo-Gorenstein with regularity $3$}\label{sec:regularity3}
This Section studies the pseudo-Gorenstein binomial edge ideal $J_G$ with $\reg(S/J_G)=3$. Let us recall some definitions which will be useful in this section. If $G$ has no induced subgraph isomorphic to the path with $k$ vertices, namely $P_k$, then $G$ is said to be a \textit{$P_k$-free graph}. A subset $X\subseteq V(G)$ is called a \textit{dominating set of G} if for every vertex $v\in V(G)$, either $v\in X$ or there exists $u\in X$ such that $\{u,v\}\in E(G)$. By a \textit{connected dominating} set $X$ we mean that $X$ is a dominating set of $G$ and the induced subgraph on $X$, $G_X$ is connected. A connected
dominating set such that every proper subset is not a connected dominating set is said to be a \textit{ minimal connected dominating set}. A connected dominating set of
minimum size is called a \textit{ minimum connected dominating set} (see also \cite{KM-JA}). 
\begin{theorem}$($\cite[Theorem 4]{CS-P_k-free}$)$\label{p_k-free}
	Let $G$ be a connected $P_k$-free graph for $k>3$ and $X$ be any minimum connected dominating set of $G$. Then $G_X$ is $P_{k-2}$-free or isomorphic to $P_{k-2}$.
\end{theorem}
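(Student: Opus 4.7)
The plan is to argue by contrapositive: assume $X$ is a minimum connected dominating set, $G_X$ contains an induced $P_{k-2}$, and $G_X \not\cong P_{k-2}$; I will construct an induced $P_k$ inside $G$, contradicting the assumption that $G$ is $P_k$-free.

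The central tool is the \emph{private neighbor property}. For any $v \in X$ whose removal leaves $G_X$ connected, the set $X \setminus \{v\}$ is connected but strictly smaller than $X$, hence fails to dominate $G$ by minimality of $|X|$; since $v$ itself has a neighbor in $X \setminus \{v\}$ (as $G_X$ is connected and $|X| \geq 2$ whenever $k > 3$), we obtain some $u_v \in V(G) \setminus X$ with $N(u_v) \cap X = \{v\}$. This is the engine that converts induced paths of $G_X$ into longer induced paths of $G$ by attaching external pendants.

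First I would fix an induced path $P \colon v_1 - v_2 - \cdots - v_{k-2}$ in $G_X$. If $|V(G_X)| = k-2$ then the path spans $G_X$, so the induced-subgraph hypothesis forces $G_X \cong P_{k-2}$, contradicting our assumption; thus $|X| \geq k-1$. I would then pick $P$ \emph{extremally}, for instance choosing the induced $P_{k-2}$ that maximizes the number of endpoints $v_i$ for which $G_X \setminus \{v_i\}$ stays connected; the goal is to guarantee that both endpoints $v_1$ and $v_{k-2}$ admit private neighbors $u_0, u_{k-1} \in V(G) \setminus X$. Once these are chosen, the sequence $u_0, v_1, \ldots, v_{k-2}, u_{k-1}$ is automatically free of chords inside $X$ (by the induced-path assumption on $P$) and between $\{u_0, u_{k-1}\}$ and the interior of $P$ (by the private-neighbor condition).

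The main obstacle, and the real content of the proof, is ruling out the chord $u_0 u_{k-1}$. A natural strategy is to observe that if $u_0 u_{k-1}$ is an edge, then together with $P$ we obtain an induced cycle on $k-1$ vertices; since $|X| \geq k-1$ and $X$ is connected and dominating, there must exist a vertex $w \in X \setminus V(P)$ reachable in $G_X$ from some $v_i$ without using the rest of $P$, and pivoting the path through $w$ (replacing one endpoint $u_0$ or $u_{k-1}$ by a chain through $w$) produces an induced $P_k$. Verifying that the extremal choice of $P$, together with a judicious selection of the private neighbors $u_0$ and $u_{k-1}$, allows this pivot to go through is the heart of the Camby--Schaudt argument, and is where a careful cut-vertex analysis inside $G_X$ is indispensable.
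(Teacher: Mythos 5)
This theorem is imported by the paper from Camby--Schaudt (\cite[Theorem 4]{CS-P_k-free}); the paper itself contains no proof of it, so the only question is whether your argument stands on its own. It does not: it is a correct identification of the strategy, but the two decisive steps are left unproven, and you say so yourself.

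First, the existence of private neighbors at \emph{both} ends of the induced $P_{k-2}$ is not established. Your private-neighbor lemma only applies to a vertex $v\in X$ for which $G_{X\setminus\{v\}}$ remains connected, i.e.\ to non-cut-vertices of $G_X$, and you propose to secure this by an ``extremal'' choice of the induced path without any argument that an induced $P_{k-2}$ with two non-cut endpoints exists (in general it need not). The standard way around this is a case analysis rather than an extremal choice: if an endpoint $v_1$ is a cut vertex of $G_X$, pick a neighbor $w$ of $v_1$ in a component of $G_X-v_1$ avoiding $v_2$; then $w-v_1-\cdots-v_{k-2}$ is induced inside $G_X$, and one checks that extending the other end (inside $G_X$ again, or by a private neighbor, which cannot be adjacent to $w\in X$) always yields an induced $P_k$. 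So the cut-vertex case is actually the easy one, but as written your mechanism for it is unjustified. Second, and more seriously, the only genuinely hard case --- both endpoints non-cut, and the chosen private neighbors $u_0,u_{k-1}$ adjacent, so that $u_0,v_1,\dots,v_{k-2},u_{k-1}$ is merely an induced $C_k$ --- is exactly the case you do not resolve. The ``pivot through a vertex $w\in X\setminus V(P)$'' is stated as a hope, and you explicitly defer its verification to ``the heart of the Camby--Schaudt argument.'' A proof whose central case is an appeal to the theorem's own source is a citation, not a proof. To make this self-contained you would need to show either that some pair of private neighbors of $v_1$ and $v_{k-2}$ is non-adjacent, or that the extra vertex of $X$ (which exists since $|X|\ge k-1>k-2$) together with the induced $C_k$ forces an induced $P_k$; neither is done here.
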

\begin{lemma}\label{cut-vertices-with-dominating-set}
	Let $G$ be a connected graph with $r$ cut vertices, $v_1,\dots,v_r$. Then for every connected dominating set $X$ of $G$, $\{v_1,\dots,v_r\}\subseteq X$. Moreover, if $G$ is accessible, then $G$ has only one minimum connected dominating set that is,  $X=\{v_1,\dots,v_r\}$.
\end{lemma}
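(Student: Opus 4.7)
The plan is to treat the two assertions separately. For the first, I would use the standard cut-vertex contradiction. Suppose some cut vertex $v_i$ lies outside $X$. Then $G-v_i$ has connected components $C_1,\dots,C_k$ with $k\ge 2$. Since $v_i\notin X$, the induced subgraph $G_X$ sits entirely inside $G-v_i$, and because $G_X$ is connected, $X$ must be contained in exactly one component, say $X\subseteq C_1$. But then for any $w\in C_2$ one has $w\notin X$ and all neighbors of $w$ in $G$ lie in $C_2\cup\{v_i\}$, which is disjoint from $X$. So $w$ is not dominated, a contradiction. Hence $\{v_1,\dots,v_r\}\subseteq X$.

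For the second assertion, the first part already forces every connected dominating set to contain $T:=\{v_1,\dots,v_r\}$, so it suffices to show that $T$ is itself a connected dominating set; it is then automatically the unique minimum one. My strategy is to induct on $r$, using the accessibility of $\mathcal{C}(G)$. I would first verify that $T\in\mathcal{C}(G)$ by checking that each $v_i$ remains a cut vertex in the appropriate reduced subgraph. Accessibility then yields a chain $\emptyset\subset T_1\subset\dots\subset T_r=T$ of elements of $\mathcal{C}(G)$, each obtained from the previous by adding a vertex that is a cut vertex of the corresponding induced subgraph. Walking down this chain, when passing from $T_i$ to $T_{i-1}$ by removing a vertex $v$, the vertices newly uncovered should all be dominated by $v$, and $v$ should be adjacent in $G$ to some vertex of $T_{i-1}$, giving connectedness of $G_T$.

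The main obstacle is turning the abstract accessibility condition into concrete adjacency data. The delicate steps are: (i) producing, for the vertex $v$ peeled at each stage, an adjacency $v\sim v'$ with $v'\in T\setminus\{v\}$, which is not automatic from mere connectivity of $G$ --- for instance, $C_4$ with pendants attached at two opposite corners has two non-adjacent cut vertices, and one checks directly that this graph fails to be accessible; and (ii) showing every non-cut vertex has some cut-vertex neighbor. Both properties should follow by combining accessibility with the block-cut decomposition of $G$ and the structural results on accessible graphs in \cite{DAV2}. If a clean invocation is not at hand, a fallback is to argue by contradiction: the failure of either property should allow one to construct a set $S\in\mathcal{C}(G)$ admitting no accessibility reduction, contradicting the hypothesis.
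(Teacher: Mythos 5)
Your first paragraph is correct and is essentially the paper's own argument for the containment $\{v_1,\dots,v_r\}\subseteq X$: the paper writes $G=G_1\cup G_2$ with $V(G_1)\cap V(G_2)=\{v_i\}$ and notes that a path in $G_X$ between vertices of $X$ on the two sides must pass through $v_i$, while you phrase the same separation argument via the components of $G-v_i$ and the failure of domination in a second component. Either version is fine.

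The ``moreover'' part is where the proposal has a genuine gap, and the specific mechanism you propose cannot be repaired. You correctly reduce the claim to showing that $T=\{v_1,\dots,v_r\}$ is itself a connected dominating set, and the two ``delicate steps'' you isolate --- (i) the cut vertices induce a connected subgraph, and (ii) every vertex is adjacent to a cut vertex --- are exactly the two facts the paper invokes, namely \cite[Theorem 4.12]{DAV2} and \cite[Lemma 4.9]{DAV2}; with those in hand the uniqueness of the minimum connected dominating set follows at once from the first part. However, your route to (i) and (ii) --- first verifying $T\in\mathcal{C}(G)$ and then descending an accessibility chain $\emptyset\subset T_1\subset\cdots\subset T_r=T$ --- fails at the very first step: the full set of cut vertices is in general \emph{not} a cut set in the binomial-edge-ideal sense. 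Already for $G=P_4$ with edges $\{1,2\},\{2,3\},\{3,4\}$ (an accessible, indeed Gorenstein, graph), the set $T=\{2,3\}$ violates the defining condition $c(G\setminus T)>c(G\setminus(T\setminus\{i\}))$, where $c(\cdot)$ counts connected components, because $G\setminus\{3\}$ already has two components; hence $T\notin\mathcal{C}(G)$ and no accessibility chain terminating at $T$ exists. So the induction you outline never gets started, and the ``fallback'' contradiction argument is only gestured at, not carried out. The intended way to close the proof is simply to quote the two structural results from \cite{DAV2} rather than to rederive them from accessibility of the cut-set system.
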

\begin{proof}
	Let $X$ be a connected dominating set and $v_i$ a cut vertex with $v_i\notin X$. Let $G=G_1\cup G_2$ with $V(G_1)\cap V(G_2)=\{v_i\}$ and  let $u_1\in V(G_1)\cap X$,  $u_2\in V(G_2)\cap X$ . Since $G_X$ is a connected graph there exists a path from $u_1$ to $u_2$ and this path must pass through $v_i$. Contradiction.
	
	If $G$ is accessible, then it follows from \cite[Lemma 4.9 and Theorem 4.12]{DAV2} that each vertex is adjacent to  a cut vertex of $G$ and $G_X$ is connected  for  $X=\{v_1,\dots,v_r\}$. Therefore $X$ is the minimum connected dominating set by the previous part of the proof.
\end{proof}

In \cite[Conjecture 1.1]{DAV2}, the author gave a conjecture about the characterization of Cohen-Macaulay graphs in terms of the structure of the underlying graphs. 
Here, we prove the conjecture under a given condition.
\begin{theorem}\label{p_5-free-conjecture}
	Let $G$ be a $P_5$-free graph. Then the following conditions are equivalent:
	\begin{enumerate}
		\item $J_G$ is strongly unmixed;
		\item $J_G$ is Cohen-Macaulay;
		\item $G$ is accessible.
	\end{enumerate}
\end{theorem}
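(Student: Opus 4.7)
The implications $(1) \Rightarrow (2)$ and $(2) \Rightarrow (3)$ are standard: the first follows from the definition of strong unmixedness, and the second is proved in \cite{DAV2}. The content of the theorem is the converse $(3) \Rightarrow (1)$, which I would establish by induction on $n = |V(G)|$, with the trivial graphs on $n \leq 2$ vertices as the base case.

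The plan is to reduce $G$ along a cut vertex to smaller $P_5$-free, accessible graphs. If $G$ is decomposable, I would decompose $G = G_1 \cup G_2$ at a simplicial cut vertex; $P_5$-freeness is automatically inherited, and accessibility restricts to the two pieces by the compatibility of the facet structure with the decomposition. Each $G_i$ then satisfies the hypotheses on fewer vertices, so the inductive hypothesis applies, and strong unmixedness propagates from the pieces to $G$ by the decomposability behavior (the strongly-unmixed analog of Proposition \ref{prop:decomposability-level-pseudo}, which appears in \cite{DAV2}). Thus I may reduce to the case where $G$ is indecomposable.

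For indecomposable $G$ that is not a complete graph, Lemma \ref{cut-vertices-with-dominating-set} identifies the set of cut vertices $X$ as the unique minimum connected dominating set. Since $G$ is $P_5$-free, Theorem \ref{p_k-free} (with $k=5$) then forces $G_X$ to be $P_3$-free or $\cong P_3$; combined with connectedness of $G_X$, being $P_3$-free additionally forces $G_X$ to be a clique. Hence $G_X$ is either a clique or the path $P_3$. Selecting a well-chosen cut vertex $v$ --- arbitrary if $G_X$ is a clique, the middle vertex if $G_X \cong P_3$ --- I would verify that for every connected component $C$ of $G \setminus v$, the induced subgraph on $V(C) \cup \{v\}$ is both $P_5$-free (immediate) and accessible (the key step). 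The inductive hypothesis then delivers strong unmixedness of each such binomial edge ideal, and these assemble to give $J_G$ strongly unmixed by the splitting reduction along a cut vertex which underlies the inductive definition of strong unmixedness in \cite{DAV2}.

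The main obstacle will be verifying accessibility inheritance for the subgraphs $G[V(C) \cup \{v\}]$, since accessibility is a global combinatorial condition on the facet structure of $J_G$ rather than a purely local graph property. The structural dichotomy --- $G_X$ a clique or $\cong P_3$ --- is what I expect to make this work: it tightly controls the neighborhood of $v$ and the way non-cut vertices can be distributed across the components of $G \setminus v$, forcing each component-plus-$v$ to inherit the facet-covering structure that defines accessibility. A secondary subtlety is the subcase of an indecomposable $G$ with no cut vertex, which would require a separate argument showing that any $2$-vertex-connected $P_5$-free accessible graph is complete; for complete graphs strong unmixedness is trivial from the unmixed determinantal structure.
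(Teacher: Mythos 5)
Your outline of $(3)\Rightarrow(1)$ has the right general shape --- reduce a non-complete accessible $P_5$-free graph along a cut vertex, using Theorem \ref{p_k-free} and Lemma \ref{cut-vertices-with-dominating-set} to control the cut-vertex structure --- but it has genuine gaps. First, the recursion you propose is not the one that defines strong unmixedness: that notion recurses on the triple $G\setminus v$, $G_v$ and $G_v\setminus v$ (where $G_v$ is obtained from $G$ by completing the neighbourhood of $v$ to a clique), not on the components of $G\setminus v$ together with $v$. Since $G_v$ is obtained by \emph{adding} edges, it is not an induced subgraph of $G$ and its $P_5$-freeness is not ``immediate''; the paper argues it separately via $d(G_v)=d(G_v\setminus v)\le d(G)\le 3$ (\cite[Lemma 4.2]{JR2}). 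Second, the cut vertex cannot be chosen arbitrarily: the definition requires $J_{G\setminus v}$ to be unmixed, and the existence of such a $v$ is exactly \cite[Proposition 6.6]{DAV2}, which applies only because $G_X$ is complete. Third, the step you yourself flag as ``the main obstacle'' --- accessibility of the reduced graphs --- is precisely the content you would have to supply; in the paper it is \cite[Corollary 5.16]{DAV2} (once $J_{G\setminus v}$ is unmixed, the graphs $G\setminus v$, $G_v$, $G_v\setminus v$ are accessible), and the induction is then packaged by \cite[Proposition 5.13]{DAV2} applied to the class of $P_5$-free accessible graphs. Without these ingredients your argument does not close.

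A smaller but instructive point: the case $G_X\cong P_3$, which you plan to handle by choosing the middle vertex, cannot occur. By Lemma \ref{cut-vertices-with-dominating-set} the minimum connected dominating set of an accessible graph is exactly its set of cut vertices, so $G_X\cong P_3$ would give three cut vertices $v_1,v_2,v_3$ inducing a path; picking a neighbour $u$ of $v_1$ in a component of $G\setminus v_1$ avoiding $v_2$, and similarly $v$ for $v_3$, produces an induced path $(u,v_1,v_2,v_3,v)$, contradicting $P_5$-freeness. Thus $G_X$ is always complete, which is exactly what makes \cite[Proposition 6.6]{DAV2} available; ruling this case out, rather than accommodating it, is the correct move.
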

\begin{proof}
	$(1)\implies (2)$ and $(2)\implies (3)$ follow from \cite[Theorem 5.11 and Theorem 3,5]{DAV2} respectively. So, we only need to prove $(3)\implies (1)$. Let $G$ be an accessible graph. Since $G$ is $P_5$-free, every induced path connecting two vertices of $G$ is of length less than $4$. Therefore, $\dist_G(u,v)\leq 3$ for any vertices $u,v$, and hence $d(G)\leq 3$. Thanks to \cite[Lemma 4.2]{JR2}, we get $d(G_v)=d(G_v\setminus v)\leq d(G) \leq 3$. Therefore, $G_v$ and $G_v\setminus v$ are $P_5$-free graphs. Moreover, any induced subgraph of $G$ is $P_5$-free. Hence $G\setminus v$ is $P_5$-free, too. By Theorem \ref{p_k-free}, $G_X$ is $P_3$-free or isomorphic to $P_3$, for any minimum connected dominating set $X$ of $G$. If $G_X$ is $P_3$-free then it is the complete graph $K_m$ for some $m\geq 1$. 
	We show that $G_X$ can not be isomorphic to $P_3$. In fact, if $G_X$ is isomorphic to $P_3$, then $X=\{v_1,v_2,v_3\}$ is the set of cut vertices of $G$ by Lemma \ref{cut-vertices-with-dominating-set}. Hence  $G$ has an induced path $P=(u,v_1,v_2,v_3,v)$ of length $4$ with $u\in N_G(v_1)$ and $v\in N_G(v_3)$. This contradicts the fact that $G$ is a $P_5$-free graph. Thus, $G_X$ is isomorphic to $K_m$ for some $m\geq 1$. Then by \cite[Proposition 6.6]{DAV2}, there exists a cut vertex $v\in X$ such that $J_{G\setminus v}$ is unmixed. Therefore, by \cite[Corollary 5.16]{DAV2}, $G\setminus v,G_v$ and $G_v\setminus v$ are accessible. Hence, by setting $\mathcal{G}$ to be the class of $P_5$-free accessible graphs, the assertion follows from \cite[Proposition 5.13]{DAV2}.
\end{proof}

\begin{cons}\label{construction-pseudo-Gorenstein}
	Let $G_0^1=P_1\sqcup P_4$ and $G_0^2=P_2\sqcup P_3$. Let $G_1^1=K_1*G_0^1$ and $G_1^2=K_1*G_0^2$. For $i\geq 2$, construct $G_i^1=K_1*(K_1\sqcup G_{i-1}^1)$ and $G_i^2=K_1*(K_1\sqcup G_{i-1}^2)$.
\end{cons}

\begin{example}\label{exa:pseudo-gorenstein-cones}
	In Figure 2, we present the graphs $G_2^1$ and $G_2^2$ of Construction \ref{construction-pseudo-Gorenstein}.
	\begin{figure}[H]
		\centering
		\begin{tikzpicture}[scale=1]
			\draw[dotted] (-4, 1) -- (-4,0);
			\draw[dotted] (-4, 1) -- (-3,1);
			\draw[dotted] (-4, 1) -- (-3,-1);
			\draw[dotted] (-4, 1) -- (-3,0.3);
			\draw[dotted] (-4, 1) -- (-3,-0.34);
			\draw[dotted] (-4, 1) -- (-5,0);
			\draw (-4, 1) -- (-5,1);
			\draw  (-4,0)-- (-3,1);
			\draw  (-3,0.3)-- (-3,1);
			\draw  (-3,0.3)-- (-3,-0.34);
			\draw  (-3,-0.34)-- (-3,-1);
			\draw  (-4,0)-- (-3,-1);
			\draw  (-4,0)-- (-3,0.3);
			\draw  (-4,0)-- (-3,-0.34);
			\draw  (-4,0)-- (-5,0);
			\draw  (1,0)-- (2,1);
			\draw  (1,0)-- (2,0);
			\draw  (1,0)-- (2,-1);
			\draw  (2,1)-- (2,0);
			\draw  (2,-1)-- (2,0);
			\draw  (0,0.34)-- (1,0);
			\draw  (0,0.34)-- (0,-0.38);
			\draw  (0,-0.38)-- (1,0);
			\draw (1, 1) -- (0,1);
			\draw[dotted] (1, 1) -- (1,0);
			\draw[dotted] (1, 1) -- (2,0);
			\draw[dotted] (1, 1) -- (2,1);
			\draw[dotted] (1, 1) -- (2,-1);
			\draw[dotted] (1, 1) -- (0,0.34);
			\draw[dotted] (1, 1) -- (0,-0.38);
			\draw[dotted] (1, 1) -- (2,0);
			\begin{scriptsize}
				\fill  (-4,1) circle (2.5pt);
				\node at (-3.84,1.43) {$v_1$};
				\fill  (-5,1) circle (2.5pt);
				\node at (-4.84,1.43) {$u_1$};
				\fill  (-4,0) circle (2.5pt);
				\fill  (-5,0) circle (2.5pt);
				\fill  (-3,1) circle (2.5pt);
				\fill  (-3,-1) circle (2.5pt);
				\fill  (-3,0.3) circle (2.5pt);
				\fill  (-3,-0.34) circle (2.5pt);
				\fill  (0,1) circle (2.5pt);
				\fill (0.16,1.43) node {$u_2$};
				\fill (1,1) circle (2.5pt);
				\fill (1.16,1.43) node {$v_2$};
				\fill  (1,0) circle (2.5pt);
				\fill  (2,1) circle (2.5pt);
				\fill  (2,0) circle (2.5pt);
				\fill  (2,-1) circle (2.5pt);
				\fill  (0,0.34) circle (2.5pt);
				\fill  (0,-0.38) circle (2.5pt);
				\fill  (2,0) circle (1.5pt);
				\node at (-3.84,-1.5) {$G_2^1=v_1*(u_1\sqcup G_1^1)$};
				\node at (1.16,-1.5) {$G_2^2=v_2*(u_2\sqcup G_1^2)$};
			\end{scriptsize}
		\end{tikzpicture}
		\caption{Pseudo-Gorenstein graphs of regularity $3$.}
	\end{figure}
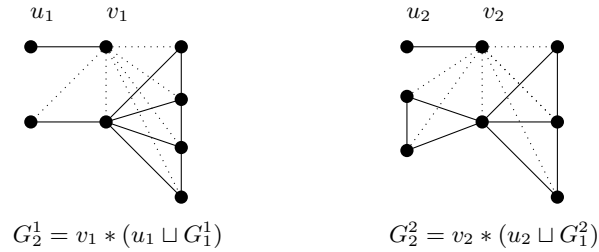
	
\end{example}

Now we characterize pseudo-Gorenstein graph $G$ with $\reg(S/J_G)=3$.
\begin{theorem}\label{thm:pseudo-Gorenstein-with-regularity3}
	Let $G$ be a connected graph on $n$ vertices. Then the following conditions are equivalent:
	\begin{enumerate}
		\item $S/J_G$ is pseudo-Gorenstein with $\reg(S/J_G)=3$;
		\item $G=G_i^1$ or $G=G_i^2$ for some $i\geq 1$, where $G_i^j$'s are defined in Construction \ref{construction-pseudo-Gorenstein}.
	\end{enumerate}
\end{theorem}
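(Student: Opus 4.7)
For $(2)\Rightarrow(1)$ I would argue by induction on $i\ge 1$. Paths are Gorenstein (hence pseudo-Gorenstein) by \cite{GBEI}, so both components of $G_0^1=P_1\sqcup P_4$ and $G_0^2=P_2\sqcup P_3$ are connected pseudo-Gorenstein Cohen--Macaulay graphs; since regularity is additive on disjoint unions of graphs in disjoint variables and is preserved by coning over a disconnected graph (\cite[Theorem~2.1]{KM-JA}), one has $\reg(S/J_{G_1^j})=3>2$, and Proposition~\ref{cone-level-pseudo}(2) yields that $G_1^j$ is pseudo-Gorenstein. The inductive step $G_i^j=K_1*(K_1\sqcup G_{i-1}^j)$ is identical, since $K_1$ is trivially pseudo-Gorenstein and $G_{i-1}^j$ is so by the inductive hypothesis.

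For $(1)\Rightarrow(2)$ I would argue by strong induction on $n=|V(G)|$. Pseudo-Gorensteinness implies Cohen--Macaulayness, and the Matsuda--Murai lower bound on the regularity in terms of the edge-length of a longest induced path of $G$ forces that length to be at most $3$, so $G$ is $P_5$-free and therefore accessible by Theorem~\ref{p_5-free-conjecture}. The argument inside the proof of that theorem shows that the set $X$ of cut vertices of $G$ induces a clique $K_m$ and coincides with the unique minimum connected dominating set. The crucial first step is to show $m=1$; then accessibility forces the unique cut vertex $v$ to be adjacent to all other vertices, so $G=K_1*(G\setminus v)$. Since $v$ is a cut vertex, $G\setminus v$ is disconnected, and the converse cone result \cite[Theorem~4.8]{DAV2} together with Cohen--Macaulayness of $G$ forces $G\setminus v=H_1\sqcup H_2$ with $H_1,H_2$ connected and Cohen--Macaulay, both pseudo-Gorenstein by Proposition~\ref{cone-level-pseudo}(2).

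The induction then closes by case analysis on the partition $\reg(S_{H_1}/J_{H_1})+\reg(S_{H_2}/J_{H_2})=3$. If the partition is $(0,3)$, then $H_1=K_1$ and $H_2$ is a strictly smaller connected pseudo-Gorenstein graph of regularity $3$, so $H_2=G_{i-1}^j$ by the inductive hypothesis and $G=K_1*(K_1\sqcup G_{i-1}^j)=G_i^j$. If the partition is $(1,2)$, then $H_1$ is a connected graph with $\reg(S_{H_1}/J_{H_1})=1$, hence a complete graph by \cite[Theorem~2.1]{KM-EJC}; an inspection of the Eagon--Northcott resolution of $J_{K_n}$ shows that $K_n$ is pseudo-Gorenstein only for $n=2$, so $H_1=P_2$, while Remark~\ref{rmk:reg<=2} forces $H_2=P_3$, yielding $G=K_1*(P_2\sqcup P_3)=G_1^2$. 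The principal obstacle will be establishing $m=1$; the plan is to suppose $m\ge 2$ and, using Proposition~\ref{prop:decomposability-level-pseudo} together with the restricted list of pseudo-Gorenstein indecomposable summands guaranteed by Corollary~\ref{cor:CM-block-graph} and Remark~\ref{rmk:reg<=2} (only $K_2$ in regularity $\le 2$), deduce that such a $G$ would have to decompose as three copies of $K_2$, hence be the block graph $P_4$, which lies outside the cone family and must be treated as an auxiliary base case.
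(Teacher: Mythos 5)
Your overall skeleton agrees with the paper's: the direction $(2)\Rightarrow(1)$ is handled exactly as in the paper via \cite[Theorem 2.1]{KM-JA} and Proposition \ref{cone-level-pseudo}(2), and in $(1)\Rightarrow(2)$ the chain ``Matsuda--Murai $\Rightarrow$ $P_5$-free $\Rightarrow$ accessible with $G_X\cong K_m$'', followed by the cone decomposition $G=v*(H_1\sqcup H_2)$ and the case analysis on $\reg(S_{H_1}/J_{H_1})+\reg(S_{H_2}/J_{H_2})=3$, is the paper's argument. The genuine gap is precisely at the step you call the principal obstacle, namely $m=1$, and your plan for it does not work. First, Proposition \ref{prop:decomposability-level-pseudo} is unavailable: a graph whose cut vertices form a clique of size $m\ge 2$ need not be decomposable, since a decomposition requires the shared vertex to be simplicial in both parts, and the cut vertices of an accessible graph of this kind are typically not simplicial (already in Construction \ref{construction-pseudo-Gorenstein} the cut vertex is a cone apex whose neighborhood is not a clique). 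Second, even for decomposable $G$ the reduction to ``three copies of $K_2$'' is circular: the indecomposable pieces have regularities summing to $3$, so one of them may itself be an indecomposable pseudo-Gorenstein graph of regularity $3$ with $m\ge 2$, which is exactly the configuration you are trying to exclude. The paper closes this step with different, homological machinery that your proposal omits entirely: for a non-simplicial cut vertex $v\in X$ it uses Ohtani's decomposition $J_G=((x_v,y_v)+J_{G\setminus v})\cap J_{G_v}$, extracts from the associated $\Tor$ long exact sequence that $S/((x_v,y_v)+J_{G_v\setminus v})$ has regularity $2$ with extremal Betti number $1$, identifies $G_v\setminus v$ with some $H_i$ of Construction \ref{construction-CM-2} via Theorem \ref{CM-reg=2}, and derives a contradiction from the inequality $\hat{\beta}(S/((x_v,y_v)+J_{G_v\setminus v}))>1$ of \cite[Proposition 3.2]{RM-Ext}. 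Without some substitute for this argument, $(1)\Rightarrow(2)$ is not established.

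Your instinct about $P_4$ deserves a separate comment, because it cannot be dismissed as an ``auxiliary base case'': $S/J_{P_4}$ is a complete intersection of three quadrics, hence Gorenstein and in particular pseudo-Gorenstein with $\reg(S/J_{P_4})=3$, while the smallest graphs $G_i^j$ of Construction \ref{construction-pseudo-Gorenstein} have six vertices. So $P_4$ satisfies (1) but not (2), and it is exactly the configuration where the contradiction used for $m\ge 2$ degenerates ($G_v\setminus v=P_3=H_1$ with $r=s=1$ has extremal Betti number equal to $1$, not greater than $1$). Any correct write-up must either add $P_4$ to the list in (2) or add a hypothesis ruling out paths; as stated, treating $P_4$ ``outside the cone family'' is incompatible with proving the asserted equivalence.
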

\begin{proof}
	$(2)\implies (1)$ Note that if $H=P_1\sqcup P_4$ or $H=P_2\sqcup P_3$, then $\reg(S_H/J_H)=3$. Therefore, by applying repeatedly \cite[Theorem 2.1]{KM-JA}, we have $\reg(S_{G_i^1}/J_{G_i^1})=\reg(S_{G_i^2}/J_{G_i^2})=3$ for $i\geq 1$. Since paths are Gorenstein, by Proposition \ref{cone-level-pseudo}(2), $S_{G_i^1}/J_{G_i^1}$ and $S_{G_i^2}/J_{G_i^2}$ are pseudo-Gorenstein for all $i\geq 1$. 
	
	$(1)\implies (2)$ Since $\reg(S/J_G)=3$, it follows from \cite[Corollary 2.3]{MM} that $G$ is a $P_5$-free graph. Let $X$ be any minimum connected dominating set of $G$. Thus by the proof of Theorem \ref{p_5-free-conjecture}, $G_X$ is isomorphic to $K_m$ for some $m\geq 1$.	First, we show that $m=1$ that is, $G$ is a cone. Let $m\geq 2$. Then by the proof of Theorem \ref{p_5-free-conjecture}, there exists a cut vertex $v\in X$ such that $G\setminus v,G_v$ and $G_v\setminus v$ are accessible. Hence, by Theorem \ref{p_5-free-conjecture}, $G\setminus v,G_v$ and $G_v\setminus v$ are Cohen-Macaulay, which further implies that $\pd(S/((x_v,y_v)+J_{G\setminus v}))=\pd(S/J_{G_v})=n-1$ and  $\pd(S/((x_v,y_v)+J_{G_v\setminus v}))=n$. As $v$ is not a simplicial vertex, by \cite[Lemma 4.8]{oh}, we can write $J_{G}=(J_{G\setminus v}+(x_v,y_v))\cap J_{G_v}$. Thus, we get the following short exact sequence:
	\begin{align}\label{ohtani-ses}
		0\longrightarrow \frac{S}{J_{G}}\longrightarrow 
		\frac{S}{(x_v,y_v)+J_{G \setminus v}} \oplus \frac{S}{J_{{G_v}}}\longrightarrow \frac{S}{(x_v,y_v)+J_{G_v \setminus v}} \longrightarrow 0.
	\end{align}
	Now, considering the long exact sequence of $\Tor $ corresponding to the short exact sequence \eqref{ohtani-ses},  we get
	\begin{align}\label{ohtani-tor}
		0\longrightarrow \Tor_{n}^{S}\left(\frac{S}{(x_v,y_v)+J_{G_v\setminus v}},\K\right)_{n+j} \longrightarrow \Tor_{n-1}^{S}\left( \frac{S}{J_G},\K\right)_{n+j}\longrightarrow \cdots. 
	\end{align}
	Since $S/J_G$ is pseudo-Gorenstein with $\reg(S/J_G)=3$, $\beta_{n-1,n+2}(S/J_G)=1$ is the unique extremal Betti number of $S/J_G$. By the sequence \eqref{ohtani-tor},  we obtain that $\reg(S/((x_v,y_v)+J_{G_v\setminus v}))\in \{1,2\}$.
	Since $G$ has 2 cut vertices, $G_v\setminus v$ is not a complete graph, and hence the regularity must be greater than $1$. Moreover, $\beta_{n,n+2}(S/((x_v,y_v)+J_{G_v\setminus v}))=1$ is the unique extremal Betti number. Therefore by Theorem \ref{CM-reg=2}, $G_v\setminus v=H_i$, with $H_i$ that are defined in Construction \ref{construction-CM-2}. It follows from \cite[Proposition 3.2]{RM-Ext} that $\hat{\beta}(S/((x_v,y_v)+J_{G_v\setminus v}))=\beta_{n,n+2}(S/((x_v,y_v)+J_{G_v\setminus v}))>1$ and we arrive at a contradiction.
	
	Therefore, $m=1$. So $G$ contains only one cut vertex, and hence by \cite[Lemma 4.9]{DAV2}, $G$ is a cone, and $G$ can be written as $G=v*(H_1\sqcup H_2)$, where $G\setminus v=H_1\sqcup H_2$. Let $H=H_1\sqcup H_2$. Then by \cite[Theorem 2.1]{KM-JA}, $\reg(S_H/J_H)=\reg(S_{H_1}/J_{H_1})+\reg(S_{H_2}/J_{H_2})=3$. We assume that $\reg(S_{H_1}/J_{H_1})\leq \reg(S_{H_2}/J_{H_2})$. By Theorem \ref{cone-level-pseudo}, $S_{H_i}/J_{H_i}$ is pseudo-Gorenstein for $i=1,2$. If $\reg(S_{H_1}/J_{H_1})=1$ and $\reg(S_{H_2}/J_{H_2})=2$, then by Remark \ref{rmk:reg<=2}, $H_1=P_2$ and $H_2=P_3$. Hence, $G=G_1^2$. Suppose now $\reg(S_{H_1}/J_{H_1})=0$ and $\reg(S_{H_2}/J_{H_2})=3$. This implies that $H_1$ is an isolated vertex and $|V(H_2)|\geq 4$. Thus, $n\geq 6$. We now prove the assertion by induction on $n$. If $|V(H_2)|=4$, then by \cite[Theorem 3.2]{KM-JCTA}, $H_2=P_4$, and in this case $G=G_1^1$. Let $n>7$. Then $|V(H_2)|>4$. Hence by the inductive hypothesis, we can write $H_2=G_i^j$ for some $i,j$ with $|V(H_2)|\geq 6$. Therefore, $G=G_{i+1}^j$ for $i\geq 1$.
\end{proof}

\section{Bipartite Graph}\label{sec:bipartite}
In this section, we characterize bipartite graphs $G$ such that $S/J_G$ is level or pseudo-Gorenstein. Let $G$ be a connected graph. The complete characterization for Cohen-Macaulay bipartite graph is given in \cite{DAV}. The basic block for this characterization is $F_m$, where $F_m$ is the graph on the vertex set $[2m]$ and with edge set $$E(F_m)= \{\{i,i+1\}:1\leq i\leq 2m-1\}\cup \{\{2i,2j+3\}:1\leq i\leq m-2,i\leq j\leq m-2\} \text{ for } m\geq 1.$$
\begin{definition}\label{def:dot}
	Let $G_i$ be graph with at least one leaf $u_i$ and $v_i\in N_{G_i}(u_i)$ such that $\deg_{G_i}(v_i)\geq 3$ for $i=1,2$. Then we define $G=(G_1,u_1)\circ (G_2,u_2)$ to be the graph obtained by deleting the vertices $u_1,u_2$ and identifying the vertices $v_1,v_2$. For simplicity, we denote $G$ by $G_1\circ G_2$ if $u_1,u_2$ are clear from the context. 
\end{definition}
\begin{theorem}\cite[Theorem 6.1]{DAV}\label{thm:CM-bipartite}
	Let $G$ be a bipartite graph. Then the following conditions are equivalent:
	\begin{enumerate}
		\item $J_G$ is Cohen-Macaulay;
		\item $G$ is decomposable in indecomposable subgraphs $H$, where $H\in \{F_m,F_{m_1}\circ F_{m_2} \circ \cdots \circ F_{m_t}\}$ with $m\geq 1,m_i\geq 3$ for $1\leq i\leq t$ and $t\geq 2$.
	\end{enumerate}
\end{theorem}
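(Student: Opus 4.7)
The plan is to treat the two directions separately, after using the Cohen-Macaulay decomposability principle \cite[Theorem 2.7]{RR} to reduce to the indecomposable case; the goal is then to show that the indecomposable Cohen-Macaulay bipartite graphs are precisely $\{F_m : m\geq 1\} \cup \{F_{m_1} \circ \cdots \circ F_{m_t} : t \geq 2,\ m_i \geq 3\}$.

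For the direction $(2) \Rightarrow (1)$, I would first prove that $F_m$ is Cohen-Macaulay for every $m \geq 1$. Using the combinatorial description of the minimal primes of $J_G$ via admissible cut-sets $S \subseteq V(G)$ from \cite{HHHKR}, one verifies by induction on $m$ that all minimal primes of $J_{F_m}$ have the same height $2m-1$, so $J_{F_m}$ is unmixed; a depth computation via Ohtani's short exact sequence \cite[Lemma 4.8]{oh} applied at a leaf of $F_m$ then yields $\depth(S_{F_m}/J_{F_m}) = 2m+1$, completing the Cohen-Macaulay check. For $F_{m_1} \circ \cdots \circ F_{m_t}$, the key input is a gluing lemma: at the identified vertex $v$ coming from leaves $u_1 \in V(G_1)$ and $u_2 \in V(G_2)$, Ohtani's sequence reduces the Cohen-Macaulayness of $G_1 \circ G_2$ to that of the pieces $G_i \setminus u_i$ and $(G_i)_v$, which are controlled inductively from the $F_{m_i}$ case.

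For the harder direction $(1) \Rightarrow (2)$, assume $G$ is indecomposable bipartite and $S/J_G$ is Cohen-Macaulay. By \cite[Theorem 3.5]{DAV2}, $G$ is accessible, so every subset of its cut vertices satisfies a strong connectivity constraint. Combining bipartiteness (no odd cycles, hence no triangles) with accessibility produces a rigid local structure: each cut vertex has bounded degree, and the blocks between consecutive cut vertices are forced to be $4$-cycles, possibly decorated with pendant leaves at the non-cut corners; these are exactly the ``rungs'' that make up $F_m$. I would then argue by induction on the number of cut vertices that $G$ either equals some $F_m$ or arises as a chain $F_{m_1} \circ \cdots \circ F_{m_t}$, peeling off one block at a time at a cut vertex. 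The constraint $m_i \geq 3$ is automatic from Definition \ref{def:dot}, which requires the vertex $v_i$ neighboring the identified leaf $u_i$ to have degree at least $3$; this fails for $F_1 = K_2$ and $F_2 = P_4$ but holds for $F_m$ with $m \geq 3$, since in that case vertex $2$ has degree $m$.

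The main obstacle is the classification step in $(1) \Rightarrow (2)$: one must use accessibility and bipartiteness together to force the precise ``ladder with ears'' shape of $F_m$ inside each indecomposable block, which demands a delicate case analysis of the block-cut tree of $G$ and of the admissible primary decomposition of $J_G$. In particular, ruling out alternative bipartite accessible configurations --- such as longer induced even cycles as building blocks, or cut vertices of higher degree --- will likely be the most intricate part, since accessibility alone does not a priori eliminate them without the extra rigidity coming from bipartiteness and the unmixedness of each Cohen-Macaulay localization at a cut vertex.
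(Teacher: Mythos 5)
This statement is not proved in the paper at all: it is quoted verbatim from \cite[Theorem 6.1]{DAV} as an external input, so there is no internal argument to compare your route against. Judged on its own, your proposal is a reasonable plan for the easy half but has a genuine gap in the half that carries all the content. The reduction to indecomposable pieces via \cite[Theorem 2.7]{RR} and the direction $(2)\Rightarrow(1)$ (unmixedness of $J_{F_m}$ via cut sets, depth via Ohtani's sequence at the identified vertex, induction along the $\circ$-chain) match how this is actually done in the literature and are plausible, though each step is itself a nontrivial lemma that you only name rather than verify.

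The gap is in $(1)\Rightarrow(2)$. Your argument there consists of the assertion that accessibility together with bipartiteness ``produces a rigid local structure'' forcing each block to be a $4$-cycle decorated with pendant leaves, i.e.\ forcing the $F_m$/ladder shape. That assertion \emph{is} the theorem; nothing in your sketch derives it. Note also that deducing the structure from accessibility alone would essentially be the bipartite case of \cite[Conjecture 1.1]{DAV2} (accessible $\Rightarrow$ Cohen--Macaulay), whose known proof goes \emph{through} the classification you are trying to establish, so the plan is close to circular. The actual proof in \cite{DAV} proceeds differently: one first classifies connected bipartite graphs with $J_G$ \emph{unmixed}, showing they are exactly the graphs built from the $F_m$ by two gluing operations, $\circ$ and a second operation $*$, and then one shows that any occurrence of $*$ destroys Cohen--Macaulayness, leaving only the $\circ$-chains. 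Your sketch contains no mechanism for producing this dichotomy, and in particular no way of excluding the $*$-glued graphs, which are unmixed bipartite configurations that are not Cohen--Macaulay. Until that classification and exclusion are actually carried out --- the step you yourself flag as ``the most intricate part'' --- what you have is an outline of where a proof would have to go, not a proof.
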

We further characterize Cohen-Macaulay bipartite graph $G$ such that $S/J_G$ is level. To achieve this goal, first, we prove that $S/J_G$ is level when $G=F_m$ for $m\geq 1$.

Let $G=F_m$ for $m\geq 1$ and define $f_{i,j}=x_iy_j-x_jy_i$ for $\{i,j\}\in E(G)$ and $i<j$. Then $J_G$ is the ideal
\begin{align*}
	(f_{i,i+1}:1\leq i\leq 2m-1)+(f_{i,j}: i \text{ is even, $j$ is odd, }2\leq i\leq 2m-4,i+3\leq j\leq 2m-1).
\end{align*}
To prove that $S/J_G$ is level we prove that $S/\ini(J_G)$ is level, too.  For this aim we find the generators of  $\ini(J_G)$ with respect to the lexicographic order $<$ induced by $x_1>\cdots>x_n>y_1>\cdots>y_n$. Clearly, it can be seen that the set of degree $2$ elements of $\ini(J_G)$ is
\begin{align*}
	\{x_iy_{i+1}:1\leq i\leq 2m-1\}\cup \{x_iy_j:i \text{ is even, $j$ is odd, }2\leq i\leq 2m-4,i+3\leq j\leq 2m-1\}.
\end{align*}
We recall the notion of admissible path, introduced in  \cite{HHHKR} in order to compute Gr\"obner bases of binomial edge ideals. A path $\pi=(i,i_1,\ldots,i_{r-1},j)$ in a graph $G$ is called {\em admissible}, if
\begin{enumerate}
	\item $i_k\neq i_\ell$ for $k\neq \ell$;
	\item for each $k=1,\ldots,r-1$ one has either $i_k<i$ or $i_k>j$;
	\item for any proper subset $\{j_1,\ldots,j_s\}$ of $\{i_1,\ldots,i_{r-1}\}$, the sequence $(i,j_1,\ldots,j_s,j)$ is not a path.
\end{enumerate}
Given an admissible path $\pi=(i,i_1,\ldots,i_{r-1},j)$ from $i$ to $j$ with $i<j$ we associate the monomial  $u_\pi=(\prod_{i_k>j}x_{i_k})(\prod_{i_\ell<i}y_{i_\ell})$. In \cite{HHHKR}, it is shown that
\[
\ini(J_G)=(x_iy_ju_\pi : \pi \mbox{ is an admissible path}).
\]

To find the elements of degree greater than $2$ in the initial ideal of $J_G$, we describe all admissible paths in $G$. First, we start with a crucial observation about the labeling of $G$.
\begin{remark}\label{remark-F_m}
	Let $G=F_m$. Then it can be noted that if $i$ is even, then $\{i,i'\}\in E(G)$ if and only if $i'$ is odd and either $i'=i-1$ or $i'>i$.
\end{remark}
\begin{lemma}
	Let $G=F_m$ with $m\geq 3$. Then $G$ has no admissible path of length greater than $3$.
\end{lemma}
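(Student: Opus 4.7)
The plan is to show that any admissible path $\pi=(v_0=u,v_1,\ldots,v_{r-1},v_r=v)$ in $F_m$ of length $r\ge 4$ admits a proper sub-sequence which is itself a path, contradicting condition (3) of admissibility.

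First I would recast Remark~\ref{remark-F_m} compactly: writing odd vertices as $O_k=2k+1$ ($0\le k\le m-1$) and even vertices as $E_a=2a$ ($1\le a\le m$), the edges of $F_m$ are precisely $\{O_k,E_a\}$ with $a\le k+1$ (a ``staircase'' adjacency). In particular, $E_a$ is adjacent to every $O_k$ with $k\ge a-1$, and this monotonicity is the engine of the shortcut argument. Since $F_m$ is bipartite, parities alternate along $\pi$, and I case-split on the parities of $u$ and $v$.

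The core mechanism is a restriction on the boundary internal vertices. If $u=O_p$, the staircase forces $v_1=E_a$ with $a\le p+1$; combined with admissibility ($v_1<u$ or $v_1>v$), the alternative $v_1>v$ is ruled out (since $a\le p+1$ is at most the ``$v$-index'' in either the $v=O_q$ or the $v=E_b$ sub-case), so $v_1<u$, i.e.\ $a\le p$. A symmetric analysis bounds $v_{r-1}$. From here the odd-odd case yields the length-$2$ sub-path $(u,v_1,v)$ directly, since $a\le p\le q+1$ gives $E_a\sim O_q$; the even-even case is symmetric via $(u,v_{r-1},v)$; and in the odd-even case ($r$ odd, hence $r\ge 5$) both $v_1<u$ and $v_{r-1}>v$ are forced, and the staircase produces the edge $\{v_1,v_{r-1}\}$, yielding the length-$3$ sub-path $(u,v_1,v_{r-1},v)$.

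The main obstacle is the fourth case, $u=E_a$ and $v=O_q$ with $a\le q$. Admissibility here still leaves two ``exceptional'' boundary configurations: $v_1=O_{a-1}=u-1$ and $v_{r-1}=E_{q+1}=v+1$. When they occur simultaneously, the edge $\{v_1,v_{r-1}\}=\{O_{a-1},E_{q+1}\}$ is genuinely missing (as $a\le q$ forces $q+1>(a-1)+1$), so the general recipe fails. To close this sub-case I would move one step inward and examine $v_2$: its adjacency to $v_1=O_{a-1}$ forces $v_2=E_{a'}$ with $a'\le a$, and since $v_2\ne u$ in fact $a'\le a-1$. Then $a'\le q+1$ makes $\{v_2,v\}$ an edge, and $(u,v_1,v_2,v)$ is the desired length-$3$ proper sub-path (proper because $r\ge 5$ forces $v_3,\ldots,v_{r-1}$ to be a non-empty set of omitted vertices). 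This exhausts the cases and proves the lemma.
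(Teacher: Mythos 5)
Your proof is correct and follows the same overall strategy as the paper's: encode Remark \ref{remark-F_m} as a staircase adjacency, split on the parities of the two endpoints, and in each case produce a proper sub-path violating condition (3) of admissibility. The odd--odd and even--even cases are identical to the paper's (shortcut through the first, respectively the last, internal vertex). The two mixed cases differ. In the case $u$ odd, $v$ even --- the only genuinely delicate one --- you pin down $v_1<u$ and $v_{r-1}>v$ and join them directly by a staircase edge, whereas the paper argues by contradiction over the full sets of odd and even internal vertices before extracting a shortcut; your version is the more direct of the two. On the other hand, in the case $u$ even and $v$ odd, which you single out as ``the main obstacle,'' you have overlooked that the staircase already gives $\{u,v\}\in E(F_m)$ (from $u<v$ one gets $a\le q\le q+1$), so the empty subset of internal vertices violates condition (3) and no admissible path of length at least $2$ has such endpoints; this is the paper's Case III, dispatched in one line. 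Your detour through $v_1=u-1$, $v_{r-1}=v+1$ and then $v_2$ is internally sound, but it solves a problem that does not arise.
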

\begin{proof}
	Let $\bar{P}=(i,i_1,\dots,i_r,j)$ be an admissible path of length $r+1\geq 4$ such that $i<j$ and $\{i,j\}\notin E(G)$. Then for every $1\leq k\leq r$, either $i_k<i$ or $i_k>j$. Since $\bar{P}$ is admissible, $i_k\neq i_l$ for $k\neq l$. We show that there is $\{j_1,\dots,j_s\}\subsetneq \{i_1,\dots,i_r\}$ such that $(i,j_1,\dots,j_s,j)$ is a path in $G$. There are four cases: \noindent \\
	\emph{Case I.} Assume that both $i,j$ are even. Thus $i_r<i$ or $i_r>j$. If $i_r<i<j$, then by Remark \ref{remark-F_m}, $i_r=j-1<i<j$ which is absurd. Therefore, $i_r>j>i$, and so, $\{i_r,i\}\in E(G)$. This implies that $(i,i_r,j)$ is a path in $G$.
	\noindent \\
	\emph{Case II.} If both $i,j$ are odd, then we look $i_1$. If $i_1>j>i$, then by Remark \ref{remark-F_m}, $i_1=i+1>j>i$ which is absurd. So, $i_1<i<j$ which further implies that $\{i_1,j\}\in E(G)$. Therefore, $(i,i_1,j)$ is a path.
	\noindent \\
	\emph{Case III.} Assume that $i$ is even and $j$ is odd. Then by Remark \ref{remark-F_m}, $\{i,j\}\in E(G)$ and hence, $\bar{P}$ is not an admissible path in $G$.
	\noindent \\
	\emph{Case IV.} Now we deal with the remaining case when $i$ is odd and $j$ is even. Let $A=\{i_2,i_4,\dots,i_{r-2}\}$ and $B=\{i_3,i_5,\dots,i_{r-1}\}$. In this case, $r\geq 4$ and so $A\neq \emptyset\neq B$. Assume that $i_k<i$ for all $i_k\in A$ and $i_l>j$ for all $i_l\in B$ which says that in particular, $i_3>j>i>i_2$. As $i_2$ is odd and $\{i_2,i_3\}\in E(G)$, by Remark \ref{remark-F_m}, we have $i_3=i_2+1$ or $i_3<i_2$ which contradicts the inequality. Therefore, there exist $i_k\in A,i_l\in B$ such that $i_k>j$ or $i_l<i$. This yields $\{i_k,j\}\in E(G)$ or $\{i,i_l\}\in E(G)$. Thus we get $(i,i_1,\dots,i_k,j)$ is a path or $(i,i_l,\dots,i_r,j)$ is a path.
	Hence, $G$ does not contain any admissible path of length $\geq 4$.
\end{proof}

Now we are ready to describe all admissible paths in $G$.
\begin{proposition}
	Let $G=F_m$ with $m\geq 3$. Then every admissible path must be of length less than $4$. Moreover, we have:
	\begin{enumerate}
		\item The admissible paths of length $2$ are  $(i,i_1,j)$ such that if $i$ is even (resp. odd), then $j$ is even (resp. odd) and $i_1>j>i$ (resp. $i_1<i<j$).
		\item  The admissible paths of length $3$ are $(i,i_1,i_2,j)$ such that $i$ is odd, $i_1<i$, $i_2>j$ and $i+2<j$.
	\end{enumerate}
\end{proposition}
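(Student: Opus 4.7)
The plan is to combine the bipartite structure of $F_m$ with the explicit neighbor description of Remark \ref{remark-F_m}. Since every edge of $F_m$ joins an even vertex to an odd vertex, $F_m$ is bipartite with parts given by the even and odd vertices in $[2m]$. Consequently, any path of length $2$ in $F_m$ connects two vertices of the same parity, while any path of length $3$ connects two vertices of opposite parities. This parity observation is the backbone of both parts, and condition (2) in the definition of admissible path combined with Remark \ref{remark-F_m} will pin down the exact positions of the interior vertices.

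\textbf{Part (1).} Let $(i,i_1,j)$ be an admissible path of length $2$. Since $i$ and $j$ have the same parity by bipartiteness, they are non-adjacent, so condition (3) applied to the empty subset is automatic. Suppose first $i$ and $j$ are both even. Then $i_1$ is odd and condition (2) gives $i_1<i$ or $i_1>j$. By Remark \ref{remark-F_m}, the only odd neighbor of an even vertex $v$ smaller than $v$ is $v-1$; therefore $i_1<i$ would force $i-1=j-1$, contradicting $i<j$, and so $i_1>j$. Conversely, any odd $i_1>j$ is indeed a common neighbor by the Remark. The case where $i$ and $j$ are both odd is dual: the only even neighbor of an odd vertex $v$ exceeding $v$ is $v+1$, so $i_1>j$ would give $i+1=j+1$, a contradiction; hence $i_1<i<j$.

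\textbf{Part (2).} Let $(i,i_1,i_2,j)$ be admissible. Bipartiteness forces $i$ and $j$ to have opposite parities. If $i$ were even and $j$ odd, then Remark \ref{remark-F_m} would give $\{i,j\}\in E(G)$, violating condition (3) at the empty subset. Hence $i$ is odd and $j$ is even. Then $i_1$ is an even neighbor of $i$ and $i_2$ is an odd neighbor of $j$. If $i_1>j$, then $i_1$ is an even neighbor of $i$ exceeding $i$, which the Remark forces to equal $i+1$; but then $i+1>j$ contradicts $j>i$. So $i_1<i$, and by the dual argument $i_2>j$. The edge $\{i_1,i_2\}$ is automatic from Remark \ref{remark-F_m} since $i_2$ is odd and $i_2>i_1$. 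Condition (3) at $\{i_1\}$ and $\{i_2\}$ is automatic by bipartiteness (both $\{i_1,j\}$ and $\{i,i_2\}$ are same-parity pairs), while condition (3) at the empty subset demands $\{i,j\}\notin E(G)$. Since $i$ is odd, $j$ is even, and $i<j$, the Remark says $\{i,j\}\in E(G)$ precisely when $j=i+1$, and so the remaining condition is $j\geq i+3$, that is $i+2<j$.

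The only real obstacle is the careful bookkeeping of parities and the two directional asymmetries in Remark \ref{remark-F_m}; no additional machinery beyond the Remark and the bipartite partition is needed, so the proof should be essentially a case analysis on the parity of $i$ and $j$.
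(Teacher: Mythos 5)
Your argument is correct and follows essentially the same route as the paper's: bipartiteness of $F_m$ fixes the parities of the endpoints, and the neighbor description in Remark \ref{remark-F_m} together with condition (2) of admissibility pins down the positions of the interior vertices; you are in fact slightly more thorough than the paper in also verifying the converse direction (that every path of the stated form is admissible), which the paper leaves implicit. The one clause you do not address is the opening assertion that there are no admissible paths of length $\geq 4$, but that is exactly the content of the lemma immediately preceding the proposition, which the paper's own proof likewise does not re-prove.
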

\begin{proof}
	(1) follows directly from Remark \ref{remark-F_m}. (2) Let  $\bar{P}=(i,i_1,i_2,j)$ be an admissible path of length $3$ with $i<j$. If $i$ is even, then $j$ is odd and so $\{i,j\}\in E(G)$. Therefore, if $\bar{P}$ is an admissible path in $G$, then $\bar{P}$ must start with an odd vertex $i$. Then $i_1$ is even and by Remark \ref{remark-F_m}, $i_1<i$. If $i_2<i$, then $i_2<j$ and so by Remark \ref{remark-F_m}, $i_2=j-1$ which contradicts the fact that $j-1<i<j$. Thus, $i_2>j$. Since $j$ is even and $\{i,j\}\notin E(G)$, $j>i+2$.
\end{proof}

As a Corollary, we find a minimal generating set of the initial ideal of $F_m$.
\begin{corollary}\label{initial-F_m}
	Let $G=F_m$ with $m\geq 3$. Then a minimal generating set of $\ini(J_{F_m})$ consists of the following elements:
	\begin{enumerate}
		\item $\{x_iy_{i+1}: 1\leq i\leq 2m-1\}\cup \{x_iy_j:i\text{ is even, $j$ is odd, }2\leq i\leq 2m-4, \text{ and } i+3\leq j\leq 2m-1\},$
		\item $\{x_ix_{i_1}y_j:i,j \text{ are even, $i_1$ is odd, }2\leq i\leq 2m-4,i+2\leq j\leq 2m-2,i+3\leq i_1\leq 2m-1 \text{ and } i<j<i_1\} \cup
		\{x_iy_{i_1}y_j:i,j \text{ are odd, $i_1$ is even, }3\leq i\leq 2m-3,2\leq i_1\leq i-1,i+2\leq j\leq 2m-1 \text{ and } i_1<i<j\}$, and
		\item $\{x_ix_{i_2}y_{i_1}y_j: i,i_2 \text{ are odd, $i_1,j$ are even, }3\leq i\leq 2m-5,2\leq i_1\leq i-1,i+3\leq j\leq 2m-2,j+1\leq i_2\leq 2m-1 \text{ and } i_1<i<j<i_2\}.$
	\end{enumerate}
\end{corollary}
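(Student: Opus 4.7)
The plan is to apply the Herzog--Hreinsdóttir--Hibi--Kiani--Rauf description
$\ini(J_G)=(x_i y_j u_\pi : \pi \text{ admissible})$ recalled above, combined with the enumeration of admissible paths in $F_m$ obtained in the preceding Lemma and Proposition. Since the Lemma bounds the length of an admissible path by $3$, the task reduces to writing down the monomial $x_i y_j u_\pi$ for each admissible path of length $1$, $2$, and $3$, and then verifying that the resulting list forms a minimal generating set. Length-$1$ admissible paths are edges $(i,j)$ with $i<j$, for which $u_\pi=1$, so they contribute exactly the edge monomials displayed in $(1)$ (these are already written out in the paragraph preceding Remark \ref{remark-F_m}).

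For length $2$, the Proposition gives two parity cases. Using Remark \ref{remark-F_m}, when $i,j$ are even with $i_1>j>i$ the middle vertex $i_1$ must be odd, so $u_\pi=x_{i_1}$ and the generator is $x_i x_{i_1} y_j$; the index bounds $2\le i\le 2m-4$, $i+2\le j\le 2m-2$, $i+3\le i_1\le 2m-1$ with $i<j<i_1$ follow by unpacking the edge conditions $\{i,i_1\}$ and $\{i_1,j\}$. When $i,j$ are odd with $i_1<i<j$, a symmetric application of the Remark forces $i_1$ even, whence $u_\pi=y_{i_1}$ and the generator is $x_i y_{i_1} y_j$ with ranges $3\le i\le 2m-3$, $2\le i_1\le i-1$, $i+2\le j\le 2m-1$ and $i_1<i<j$. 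Together these produce the two families of $(2)$.

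For length $3$, the Proposition gives the shape $(i,i_1,i_2,j)$ with $i$ odd, $i_1<i$, $i_2>j$ and $j>i+2$. Applying Remark \ref{remark-F_m} to the three edges $\{i,i_1\}$, $\{i_1,i_2\}$, $\{i_2,j\}$ in turn forces $i_1$ even, $i_2$ odd and $j$ even (the condition $j\ge i+3$ follows from $j>i+2$ together with these parities). Then $u_\pi=x_{i_2}y_{i_1}$, which gives the monomial $x_i x_{i_2} y_{i_1} y_j$ and the inequalities displayed in $(3)$.

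What remains is minimality, which I expect to be the main—though still routine—obstacle. Since the generators in $(1)$, $(2)$, $(3)$ have degrees $2$, $3$, $4$ respectively, divisibility can only run from a lower-degree monomial to a higher-degree one. Each such potential divisibility is ruled out either by the bipartite parity constraint (edges of $F_m$ join vertices of opposite parity, whereas e.g. in the $x_i x_{i_1} y_j$ type of $(2)$ both $i$ and $j$ are even, so no degree-$2$ edge generator can cover the pair $(i,j)$), or by the ordering constraint $a<b$ imposed on an edge generator $x_a y_b$, which fails whenever the extra intermediate variable sits strictly above $j$ or strictly below $i$. The only subtle comparison is between the degree-$3$ and degree-$4$ families: here the evenness of $i_1$ in the length-$3$ monomial $x_i x_{i_2} y_{i_1} y_j$ clashes with the oddness of $i_1$ in the first length-$2$ family and with the oddness of $b$ in $x_i y_{i_1} y_j$, so no length-$2$ monomial can divide a length-$3$ one. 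This finishes the verification.
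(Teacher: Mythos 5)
Your proposal is correct and follows exactly the route the paper intends: the corollary is stated without proof precisely because it is the direct translation of the admissible-path enumeration (the Lemma bounding lengths by $3$ and the Proposition describing the length-$2$ and length-$3$ paths) through the formula $\ini(J_G)=(x_iy_ju_\pi)$, together with the routine parity/ordering check that no listed monomial divides another. Your minimality verification is a welcome addition the paper leaves implicit, and its parity arguments are sound.
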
 


\begin{theorem}\label{thm:level-F_m}
	Let $G=F_m$ with $m\geq 1.$ Then $S/\ini(J_G)$ is a level ring. In particular, $S/J_G$ is a level ring.
\end{theorem}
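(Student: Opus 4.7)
The strategy is to pass to the squarefree initial ideal, establish levelness there using Stanley--Reisner technology, and then transfer the conclusion back to $S/J_G$ essentially for free.

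\textbf{Reduction to the initial ideal.} By Theorem \ref{thm:CM-bipartite}, $S/J_{F_m}$ is Cohen--Macaulay, and by Corollary \ref{initial-F_m} the ideal $\ini(J_{F_m})$ is squarefree. Invoking the Conca--Varbaro theorem on squarefree initial ideals,
$$p := \pd(S/J_G) = \pd(S/\ini(J_G)) \quad \text{and} \quad r := \reg(S/J_G) = \reg(S/\ini(J_G)),$$
so $S/\ini(J_G)$ is again Cohen--Macaulay with the same projective dimension and regularity. Combined with the general inequality $\beta_{i,j}(S/J_G) \le \beta_{i,j}(S/\ini(J_G))$ and the automatic nonvanishing of $\beta_{p,p+r}(S/J_G)$ coming from Cohen--Macaulayness, once I show $\beta_{p, p+j}(S/\ini(J_G)) = 0$ for every $1 \le j \le r-1$, both levelness conclusions follow simultaneously. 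So the task reduces to this single Betti-vanishing statement for the initial ideal.

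\textbf{Stanley--Reisner reformulation.} Write $I := \ini(J_G) = I_\Delta$ for the Stanley--Reisner complex $\Delta$ on the ground set $\{x_1,\ldots,x_{2m},y_1,\ldots,y_{2m}\}$, whose minimal non-faces are precisely the squarefree monomials enumerated in Corollary \ref{initial-F_m}. Hochster's formula then reads
\[
\beta_{p, p+j}(S/I_\Delta) \;=\; \sum_{|W|=p+j} \dim_{\K}\tilde H_{j-1}(\Delta|_W;\K),
\]
and the task further reduces to showing that $\tilde H_{j-1}(\Delta|_W;\K) = 0$ for every subset $W$ with $|W| = p+j$ and $1 \le j \le r-1$.

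\textbf{Executing the combinatorial step and main obstacle.} The delicate part is exactly this topological vanishing. The natural route is induction on $m$: for $m \in \{1,2\}$ one has $F_m = P_{2m}$, which is Gorenstein and so a fortiori level. For the inductive step, exploit the fact that $2m$ is a leaf of $F_m$ and split $\Delta$ via a Mayer--Vietoris argument over the cover by the star and deletion of the Stanley--Reisner variables $x_{2m}$ and $y_{2m}$; this should reduce the analysis, up to the combinatorics of the quadratic generators incident to $2m-1$ and $2m$, to a complex closely related to $\Delta_{F_{m-1}}$. The principal obstacle is the interaction between the cubic and quartic generators of Corollary \ref{initial-F_m} (coming from admissible paths of lengths $2$ and $3$ in $F_m$) and the resulting subcomplex restrictions $\Delta|_W$: these nonquadratic obstructions involve pairs of non-adjacent even/odd vertices and break naive inductive decompositions. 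The admissibility conditions developed in the preceding lemmas are tailored precisely to the case analysis that the induction demands, and should carry the argument through; the ``in particular'' statement for $S/J_G$ then comes for free from the first paragraph.
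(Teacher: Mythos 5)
Your first paragraph is correct and matches the paper's overall strategy: both you and the authors reduce the problem to showing that $S/\ini(J_G)$ is level, and the transfer back to $S/J_G$ via $\beta_{i,j}(S/J_G)\le\beta_{i,j}(S/\ini(J_G))$ together with Cohen--Macaulayness is sound. The problem is that everything after that reduction is a plan rather than a proof. The entire content of the theorem for $m\ge 3$ is the vanishing $\beta_{p,p+j}(S/\ini(J_G))=0$ for $1\le j\le r-1$, and your proposal does not establish it: you reformulate it via Hochster's formula, propose an induction on $m$ with a Mayer--Vietoris decomposition, and then explicitly concede that the cubic and quartic generators of Corollary \ref{initial-F_m} ``break naive inductive decompositions'' and that the argument ``should carry through.'' That is precisely the step that needs to be executed, and there is a concrete reason to doubt the route as described: deleting the leaf $2m$ (and its neighbour $2m-1$) from $F_m$ does not produce $F_{m-1}$ --- the chord set $\{2i,2j+3\}$ changes in an essential way --- so the inductive step does not even have a well-identified smaller instance to reduce to, and the restrictions $\Delta|_W$ appearing in Hochster's formula range over all of the $4m$ variables, not just those adjacent to the deleted leaf.

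For contrast, the paper avoids the topological route entirely. It exhibits the explicit linear system of parameters $x_{2m},\,y_1,\,x_i-y_{i+1}$ ($1\le i\le 2m-1$), passes to the Artinian reduction $S'/I'$ with $S'=\K[x_1,\dots,x_{2m-1}]$ (the substitution $y_{i+1}\mapsto x_i$ turns the generators of Corollary \ref{initial-F_m} into an explicit list of monomials in $2m-1$ variables), and then verifies by direct inspection that $\soc(S'/I')$ is concentrated in degree $3$: there is a degree-$3$ socle element, no socle elements in degrees $1$ or $2$, and every monomial of degree $4$ lies in $I'$. Levelness then follows from \cite[Chapter III, Proposition 3.2]{Stanley}. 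This is a finite, checkable case analysis on the parity patterns of the indices, and it is exactly the kind of concrete argument your proposal would need in place of the unexecuted Mayer--Vietoris step. As it stands, your submission identifies the right target but leaves the essential part of the proof as a conjecture.
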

\begin{proof}
	If $m=1,2$, then $G=P_{2m}$. In this case $\ini(J_G)=(x_iy_{i+1}:1\leq i\leq 2m-1)$ is complete intersection. Therefore, $J_G$ is also complete intersection and hence, $S/J_G$ is level. Let $G=F_m$ with $m\geq 3$ and $R=S/\ini(J_G)$. Let $I=(x_{2m},y_1,x_i-y_{i+1}:1\leq i\leq 2m-1)$. Then we see that
	$$\frac{R}{IR}\simeq \frac{S}{IS+\ini(J_G)}\simeq \frac{S'}{I'},$$
	where $S'=\K[x_1,\dots,x_{2m-1}]$ and $I'\subseteq S'$ is the ideal generated by replacing $y_{i+1}$ by $x_i$ for $1\leq i\leq 2m-1$ in the generators of $\ini(J_G)$. Since $\ini(J_G)$ contains the elements of the form $x_iy_{i+1}$ for $1\leq i\leq 2m-1$, the set $\{x_1^2,\dots,x_{2m-1}^2\}\subseteq I'$. Therefore, $I'$ is a $(x_1,\dots,x_{2m-1})$-primary ideal in $S'$ and hence, $\ell(R/IR)=\ell(S'/I')<\infty$ which further implies that $x_{2m},y_1,x_1-y_2,\dots,x_{2m-1}-y_{2m}$ is a homogeneous system of parameters of $R$. Let $R'=S'/I'$. \\
	\noindent
	\emph{Claim:} All the elements in $\soc(R')$ are of degree $3$.
	\\ \noindent
	\emph{Proof of the claim:}
	One generating set of $I'$ is obtained by replacing $y_{i+1}=x_i$ for $1\leq i\leq 2m-1$ in the generating set of $\ini(J_G)$ as defined in Corollary \ref{initial-F_m}. Therefore, we obtain
	\begin{enumerate}[(i)]
		\item $\{x_i^2: 1\leq i\leq 2m-1\}\cup \{x_ix_j:i,j\text{ are even, }2\leq i\leq 2m-4, \ 4\leq j\leq 2m-2 \text{ and } i+1<j\},$
		\item $\{x_ix_jx_{i_1}:i \text{ is even, $j,i_1$ are odd, }2\leq i\leq 2m-4, \ 3\leq j\leq 2m-3, \ 5\leq i_1\leq 2m-1 \text{ and }i<j<i_1-1\} \cup
		\{x_{i_1}x_ix_j:i_1,i \text{ are odd, $j$ is even, }1\leq i_1\leq 2m-1, \ 3\leq i\leq 2m-3, \ 4\leq j\leq 2m-2, \text{ and } i_1+1<i<j\}$, and
		\item $\{x_{i_1}x_ix_jx_{i_2}: i_1,i,i_2,j \text{ are odd, }1\leq i_1\leq 2m-7, \ 3\leq i\leq 2m-5, \ 5\leq j\leq 2m-3, \ 7\leq i_2\leq 2m-1 \text{ and }i_1+1<i<j-1<i_2-2\}.$
	\end{enumerate}
	We want to prove the $\soc(R')$ is generated in degree $3$ and hence, it is level. Clearly, $x_1x_2x_3\notin I'$, and if $i$ is even with $i>2$, then $(x_1x_2x_3)x_i=x_1x_3(x_2x_i)\in I'$ by (i). If $i$ $(>3)$ is odd, then $(x_1x_2x_3)x_i=x_1(x_2x_3x_i)\in I'$ by (ii). Also, $(x_1x_2x_3)x_i\in I'$ for $i=1,2,3$. Thus, $x_1x_2x_3\in \soc
	(R')$, and hence $\soc(R')$ contains a non-zero element of degree $3$.
	
	We observe that all degree $1$ elements are not in $\soc(R')$. In fact, $x_1x_i\notin I'$ for all $2\leq i\leq 2m-1$ implies that $x_i\notin \soc(R')$ for all $2\leq i\leq 2m-1$ and the same holds for $x_1$, too.
	
	Now we focus on degree $2$ elements. First, we observe from the description of (i), that degree $2$ elements not in $I'$ belong to the set
	$A=A_1\cup A_2\cup A_3,$ where $A_1=\{x_ix_j: i \text{ is odd, $j$ is odd, }i<j\}$, $A_2=\{x_ix_j: i \text{ is odd, $j$ is even and }i<j\},$ and $A_3=\{x_ix_j: i \text{ is even, $j$ is odd and }i<j\}$. To show that $x_ix_j\notin \soc(R')$ for $x_ix_j\in A$, it is enough to find an element $x_k$ for $1\leq k\leq 2m-1$ such that $x_ix_jx_k\notin I'$. If $i,j,k$ are all odd, by the generators in (i) and (ii) then $x_ix_jx_k\notin I'$, which further implies that $x_ix_j\notin \soc(R')$ for $x_ix_j\in A_1$. Let $x_ix_j\in A_2$. Then by the generators in (ii), $(x_ix_j)x_{2m-1}\notin I'$, and so $x_ix_j\notin \soc(R')$. Also it can be noted that $x_1(x_ix_j)\notin I'$ for $x_ix_j\in A_3$. Therefore, $x_ix_j\notin \soc(R')$ for $x_ix_j\in A$, and hence $\soc(R')$ does not contain any degree $2$ elements.
	
	Now we show that $I'$ contains all the elements of degree greater than $3$ of $S'$. To this aim it is enough to show that $x_ix_jx_kx_l\in I'$ for any $1\leq i<j<k<l\leq 2m-1$. If at least two elements of $\{i,j,k,l\}$ are even, then by (i), $x_ix_jx_kx_l\in I'$. If $i,j,k,l$ are all odd, then by (iii),  $x_ix_jx_kx_l\in I'$. Suppose exactly one in $\{i,j,k,l\}$ is even. We choose three elements containing the even one from $\{i,j,k,l\}$ such that the even element is either the minimum or the maximum. In both cases, it follows from (ii) that $x_ix_jx_kx_l\in I'$. Thus, the claim follows.
	
	Therefore, it follows from \cite[Chapter III, Proposition 3.2]{Stanley} that $S/\ini(J_G)$ is a level ring, and hence, $S/J_G$ is a level ring, too.
\end{proof}

Now we characterize Cohen-Macaulay bipartite graphs that are level. First we show that for an indecomposable Cohen-Macaulay bipartite graph $G$ on $n$ vertices, $\beta_{n-1,n-1+d}(S/J_G)\neq 0$, where $d=d(G)$.
\begin{remark}\label{extremal-F_m}
	If $G=F_m$ or $G=F_m^{W,2}$, then by \cite[Lemma 3.2, Proposition 3.3]{DAV} and \cite[Theorem 3.4, Proposition 4.1]{JA1}, $S/J_G$ is Cohen-Macaulay and $\reg(S/J_G)=3$ respectively. Therefore, $\beta_{n-1,n-1+3}(S/J_G)$ is the unique extremal Betti number of $S/J_G$. Note that $d(G)=3$.
\end{remark}

\begin{lemma}\label{lemm:non-zero-betti-number}
	Let $F=F_{m}$ or $F=F_m^{W,2}$, a fan graph, with $W=W_1\sqcup W_2\subseteq [m]$ and $|W_1|\geq 2$ for some $m\geq 3$. Let $G=F_{m_1}\circ F_{m_2}\circ \cdots \circ F_{m_t}\circ F$ on $n$ vertices for $m_i\geq 3$. Then $\beta_{n-1,n-1+d(G)}(S/J_G)\neq 0$.
\end{lemma}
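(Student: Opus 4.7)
The plan is to proceed by induction on $t$, the number of $F_{m_i}$ factors preceding $F$ in the $\circ$-composition. For the base case $t = 0$ (so $G = F$), Remark~\ref{extremal-F_m} provides the extremal Betti number $\beta_{n-1, n-1+3}(S/J_F) \neq 0$ with $d(F) = 3$, so the conclusion holds.

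For the inductive step, write $G = F_{m_1} \circ H$ with $H = F_{m_2} \circ \cdots \circ F_{m_t} \circ F$, and let $v$ be the cut vertex produced by this outermost $\circ$-operation. A direct computation in $F_m$ (using that the cut vertices $2$ and $2m-1$ are adjacent when $m \geq 3$, and that a leaf is at distance $2$ from the opposite cut vertex) yields the diameter recursion $d(G) = d(H) + 1$. Since $v$ is not simplicial in $G$, Ohtani's formula applies and gives the short exact sequence \eqref{ohtani-ses}. Here $G \setminus v$ decomposes as the disjoint union $(F_{m_1} \setminus \{u_1, v_1\}) \sqcup (H \setminus \{u_2, v_2\})$; each summand is itself a Cohen-Macaulay bipartite graph of a form covered by Theorem~\ref{thm:CM-bipartite} (in fact $F_{m_1} \setminus \{u_1, v_1\}$ is isomorphic after relabeling to $F_{m_1 - 1}$, and $H \setminus \{u_2, v_2\}$ is isomorphic to $F_{m_2 - 1} \circ F_{m_3} \circ \cdots \circ F_{m_t} \circ F$).

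The idea is then to extract a nonzero element of $\Tor_{n-1}^S(S/J_G, \K)_{n-1+d(G)}$ from the long exact sequence in Tor associated to \eqref{ohtani-ses}. By the Betti additivity across disjoint unions (\cite[Theorem 3.1]{JNR1}, as in Proposition~\ref{prop:decomposability-level-pseudo}), the top Betti number of $S/((x_v, y_v) + J_{G \setminus v})$ factors as a product of top Betti numbers of the two summands. The inductive hypothesis (perhaps strengthened to allow the stepped-down factor $F_{m_2 - 1}$ with $m_2 - 1 = 2$) supplies a nonzero contribution on the $H \setminus \{u_2, v_2\}$ side, and combined with the top Betti contribution from $F_{m_1 - 1}$ this gives a nonzero element of the correct internal degree. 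Since $G$ is Cohen-Macaulay with $\pd(S/J_G) = n-1$, the connecting homomorphism then promotes this to the required element of $\Tor_{n-1}^S(S/J_G, \K)_{n-1+d(G)}$.

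The hard part is twofold: first, strengthening the inductive hypothesis to cover the case $m_2 - 1 = 2$ (so that $F_{m_2 - 1}$ is just a path) while keeping the diameter formula and the Cohen-Macaulay property intact; and second, verifying by a careful degree analysis that the contribution at position $(n-1, n-1+d(G))$ survives in the long exact sequence and is not cancelled by the Tor-terms coming from $S/J_{G_v}$ and $S/((x_v, y_v) + J_{G_v \setminus v})$. The latter should hinge on the observation that the clique completion at $v$ strictly shortens distances passing through $v$, which should force the obstructing Tor groups to vanish at internal degree $n-1+d(G)$ in homological position $n-1$.
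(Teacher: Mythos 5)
Your plan breaks down at the point where you try to transfer a class from $S/((x_v,y_v)+J_{G\setminus v})$ to $S/J_G$. In the long exact Tor sequence attached to \eqref{ohtani-ses}, the module $S/((x_v,y_v)+J_{G\setminus v})$ sits in the \emph{middle} term, so the relevant map is $\Tor_{n-1}(S/J_G,\K)\to \Tor_{n-1}(S/((x_v,y_v)+J_{G\setminus v}),\K)\oplus\Tor_{n-1}(S/J_{G_v},\K)$, pointing \emph{out of} the group you want to be nonzero; there is no connecting homomorphism "promoting" a nonzero Tor of the middle term into $\Tor_{n-1}(S/J_G,\K)$. To use your class you would have to show it lies in the kernel of the map to $\Tor_{n-1}(S/((x_v,y_v)+J_{G_v\setminus v}),\K)$, which is exactly the cancellation problem you defer, and it is the actual crux, not a technicality. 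The paper's proof is engineered to avoid it: it decomposes at the cut vertex $v$ joining the \emph{last} factor $F$ to the rest, and draws the nonzero class from the right-hand module $S/((x_v,y_v)+J_{G_v\setminus v})$ at homological degree $n$, where the middle term contributes nothing because both $S/((x_v,y_v)+J_{G\setminus v})$ and $S/J_{G_v}$ are Cohen--Macaulay of projective dimension $n-1$; hence the connecting map in \eqref{ohtani-tor} is injective and no cancellation analysis is needed. With that choice of $v$, the graph $G_v\setminus v$ is again of the form $F_{m_1}\circ\cdots\circ F_{m_{t-1}}\circ F'$ with $F'$ a $2$-pure fan graph of diameter $d(G)-1$ (base case via Remark \ref{extremal-F_m}), which is precisely why the fan graphs $F_m^{W,2}$ appear in the statement; your proposal never uses them, and without passing through $G_v\setminus v$ the induction does not close.

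There is also a degree mismatch in the class you propose to feed in. To reach homological degree $n-1$ for $S/((x_v,y_v)+J_{G\setminus v})$ you must take the top homological strand of \emph{both} components of $G\setminus v$. Since $F_{m_1-1}$ is level of regularity $3$ (Theorem \ref{thm:level-F_m}, or a complete intersection when $m_1=3$), its only top-degree Betti numbers lie in strand $3$, while your inductive hypothesis places the class of the second component $F_{m_2-1}\circ F_{m_3}\circ\cdots\circ F$ in strand equal to its diameter, which is $d(G)-1$ (it is a composition of $t$ factors, so its diameter is $t+2$, not $t$). The product therefore sits in internal degree $(n-1)+3+(d(G)-1)=(n-1)+d(G)+2$, not the required $(n-1)+d(G)$; so even before the transfer problem, the input your argument produces is not at the position $\beta_{n-1,n-1+d(G)}$ you need.
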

\begin{proof}
	Let $V(F_{m_1}\circ F_{m_2}\circ \cdots \circ F_{m_t})\cap V(F)=\{v\}$ and $f_t,f$ be the pendant vertices which are removed from $F_{m_1}\circ F_{m_2}\circ \cdots \circ F_{m_t},F$ respectively.  We proceed by induction on $t$. Observe that the diameter of $G$ is $d(G)=t+3$. Suppose first $t=1$. Then $G=F_{m_1}\circ F_m$ or $G=F_{m_1}\circ F_m^{W,2}$. 
	Let $K$ and $K'$ denote the complete graph with the vertex set $N_G[v]$ and $N_G(v)$ respectively. Then $G\setminus v=(F_{m_1}\setminus \{v,f_1\})\sqcup (F\setminus \{v,f\})$. Note that $G_v$ (resp. $G_v\setminus v$) is a $2$-pure fan graph and obtained by adding fan to $K$ (resp. $K'$) on $W'=(N_{F_{m_1}}(v)\setminus \{f_1\})\sqcup (N_{F}(v)\setminus \{f\})$ if $F=F_m$ or $W''=(N_{F_{m_1}}(v)\setminus \{f_1\})\sqcup (W\setminus W_1)$ if $F=F_m^{W,2}$. It follows from the proof of \cite[Theorem 4.9]{DAV} that $G\setminus v, G_v,G_v\setminus v$ are Cohen-Macaulay graphs. Therefore, $\pd(S/((x_v,y_v)+J_{G\setminus v}))=\pd(S/J_{G_v})=n-1$ and  $\pd(S/((x_v,y_v)+J_{G_v\setminus v}))=n$. 
	By Remark \ref{extremal-F_m}, $\beta_{n,n+3}(S/((x_v,y_v)+J_{G_v\setminus v}))\neq 0$ , and hence it follows from the long exact sequence \eqref{ohtani-tor} for $j=3$ that $\beta_{n-1,n+3}(S/J_G)\neq 0$. 
	
	Assume now $t\geq 2$. Then by the proof of \cite[Theorem 4.9]{DAV}, $G\setminus v$, $G_v$ and $G_v\setminus v$ are Cohen-Macaulay graphs. Therefore, $\pd(S/((x_v,y_v)+J_{G\setminus v}))=n-1$ and $\pd(S/J_{G_v})=n-1$. Also, $G_v=F_{m_1}\circ F_{m_2}\circ\cdots \circ F_{m_{t-1}}\circ F'$, where $F'$ is a fan graph as described in the paragraph above. So $G_v$ and $G_v\setminus v$ satisfy the inductive hypothesis. Hence, $\beta_{n,n+d(G_v\setminus v)}(S/((x_v,y_v)+J_{G_v\setminus v}))\neq 0$. It can be noted that $d(G_v\setminus v)=t+2$. Therefore, considering $j=t+2$ in \eqref{ohtani-tor}, we get $\beta_{n-1,n-1+t+3}(S/J_G)\neq 0$.
\end{proof}

\begin{theorem}\label{thm:level-bipartite}
	Let $G$ be a bipartite graph. Then the following conditions are equivalent:
	\begin{enumerate}
		\item $S/J_G$ is level;
		\item $G$ is decomposable into indecomposable subgraphs $F_m$ for $m\geq 1$.
	\end{enumerate}
\end{theorem}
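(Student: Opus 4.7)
The implication $(2)\Rightarrow(1)$ is almost immediate: if each indecomposable component of $G$ is of the form $F_m$ for some $m\geq 1$, then Theorem \ref{thm:level-F_m} ensures each $S_{F_m}/J_{F_m}$ is a level ring, and Corollary \ref{cor:level-pseudo-decomposable} propagates this to $S/J_G$.

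For $(1)\Rightarrow(2)$, the plan is as follows. Levelness of $S/J_G$ forces $S/J_G$ to be Cohen--Macaulay, and Theorem \ref{thm:CM-bipartite} then decomposes $G$ into indecomposable bipartite blocks, each of which is either $F_m$ (with $m\geq 1$) or $F_{m_1}\circ\cdots\circ F_{m_t}$ with $t\geq 2$ and $m_i\geq 3$. By Corollary \ref{cor:level-pseudo-decomposable} each such block $G'$ must individually have $S_{G'}/J_{G'}$ level, and the $F_m$-case is already handled by Theorem \ref{thm:level-F_m}. The entire content of this direction is therefore to show that no indecomposable $\circ$-composition with $t\geq 2$ can be level, so that only the $F_m$-blocks survive.

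Fixing such a candidate $G'=F_{m_1}\circ\cdots\circ F_{m_t}$ with $t\geq 2$ and $n'=|V(G')|$, the next step is to invoke Lemma \ref{lemm:non-zero-betti-number} with the final block $F_{m_t}$ in the role of the fan graph $F$ and $F_{m_1}\circ\cdots\circ F_{m_{t-1}}$ (which has $t-1\geq 1$ factors) in the role required by the lemma. This produces
\[
\beta_{n'-1,\, n'-1+d(G')}(S_{G'}/J_{G'})\ \neq\ 0,\qquad d(G')=t+2\geq 4.
\]
Since $S_{G'}/J_{G'}$ is Cohen--Macaulay of projective dimension $n'-1$, it admits a unique extremal Betti number $\hat\beta(S_{G'}/J_{G'})=\beta_{n'-1,\, n'-1+r}(S_{G'}/J_{G'})$ where $r=\reg(S_{G'}/J_{G'})$. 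Levelness of $S_{G'}/J_{G'}$ would force every non-zero top-row Betti number to sit in a single internal degree, and hence $r=d(G')$; the contradiction will therefore come from proving $r>d(G')$ whenever $t\geq 2$.

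The main obstacle is precisely producing this strict inequality $r>d(G')$, equivalently exhibiting a second non-zero Betti number $\beta_{n'-1,\, n'-1+j}(S_{G'}/J_{G'})$ with $j>d(G')$. The plan is induction on $t$, iterating the Ohtani-type short exact sequence \eqref{ohtani-ses} at a cut vertex $v$ realising one of the $\circ$-gluings of $G'$. Each such application yields a long exact sequence of $\Tor$ that relates the Betti numbers of $S_{G'}/J_{G'}$ to those of the smaller composition $G_v=F_{m_1}\circ\cdots\circ F_{m_{t-1}}\circ F'$, where $F'$ is again a fan graph of the type allowed by Lemma \ref{lemm:non-zero-betti-number}. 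Tracking simultaneously both the diameter-driven non-zero Betti number produced by Lemma \ref{lemm:non-zero-betti-number} and the inductively larger extremal Betti number of the smaller composition, exactly as in the proof of that lemma, yields the required $\beta_{n'-1,\, n'-1+j}$ with $j>d(G')$ and completes the contradiction, so that $G$ decomposes only into indecomposable $F_m$-blocks.
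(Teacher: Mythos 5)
Your overall architecture matches the paper's: reduce via Theorem \ref{thm:CM-bipartite} and Proposition \ref{prop:decomposability-level-pseudo} (equivalently Corollary \ref{cor:level-pseudo-decomposable}) to the two types of indecomposable blocks, dispose of $F_m$ by Theorem \ref{thm:level-F_m}, and kill the $\circ$-compositions by exhibiting two non-zero Betti numbers in homological degree $n'-1$, one of which is supplied by Lemma \ref{lemm:non-zero-betti-number} at internal degree $d(G')$. Up to that point the argument is correct and is essentially the paper's.

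The gap is in how you obtain the second non-zero Betti number, i.e.\ the strict inequality $\reg(S_{G'}/J_{G'})>d(G')$ for $G'=F_{m_1}\circ\cdots\circ F_{m_t}$ with $t\geq 2$. The paper does not prove this; it quotes it directly from \cite[Theorem 4.7]{JA1}, which gives $\reg(S/J_{G'})$ strictly larger than the diameter for these compositions. You instead announce an induction on $t$ via the Ohtani sequence \eqref{ohtani-ses}, but you never carry it out, and as sketched it does not close. Two concrete problems: (i) the base case $t=2$ requires showing $\reg(S/J_{F_{m_1}\circ F_{m_2}})>4$, and nothing you invoke (Lemma \ref{lemm:non-zero-betti-number}, Remark \ref{extremal-F_m}, Theorem \ref{thm:level-F_m}) produces a non-zero Betti number in internal degree $>d$; the lemma only ever produces the one \emph{at} degree $d$. (ii) In the inductive step, the graph $G_v\setminus v$ appearing in \eqref{ohtani-tor} is $F_{m_1}\circ\cdots\circ F_{m_{t-1}}\circ F'$ with $F'$ a fan graph $F_m^{W,2}$, not a pure composition of $F_{m_i}$'s, so your induction hypothesis (stated for $\circ$-compositions of the $F_{m_i}$) does not apply to it; you would have to prove the regularity lower bound simultaneously for the fan-graph-terminated compositions, which is exactly the extra content Lemma \ref{lemm:non-zero-betti-number} supplies only for the diameter degree. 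Either cite the regularity computation from \cite[Theorem 4.7]{JA1} as the paper does, or genuinely prove the strengthened inductive statement for fan-terminated compositions; as written, the non-levelness of the $\circ$-blocks is not established.
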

\begin{proof}
	Let $G$ be a Cohen-Macaulay bipartite graph. By Theorem \ref{thm:CM-bipartite} and Proposition \ref{prop:decomposability-level-pseudo}, it is enough to consider that either $G=F_m$ or $G=F_{m_1}\circ F_{m_2}\circ \cdots \circ F_{m_{t}} \text{ for } m\geq 1,m_i\geq 3.$ By Theorem \ref{thm:level-F_m}, $S/J_{G}$ is level when $G=F_m$. If $G=F_{m_1}\circ F_{m_2} \circ \cdots \circ F_{m_t}$ for $m_i\geq 3$, then by Lemma \ref{lemm:non-zero-betti-number}, $\beta_{n-1,n-1+t+3}(S/J_G)\neq 0$. It follows from \cite[Theorem 4.7]{JA1} that $\reg(S/J_G)>t+3$. Therefore, $S/J_G$ is not level. Hence, the assertion follows.
\end{proof}

Now we characterize bipartite graphs which are pseudo-Gorenstein.
\begin{theorem}\label{thm:pseduo-Gorenstein-bipartite}
	Let $G$ be a bipartite graph. Then the following conditions are equivalent:
	\begin{enumerate}
		\item $S/J_G$ is pseudo-Gorenstein;
		\item $G$ is decomposable into indecomposable subgraphs $H$, where $H\in \{F_1,F_3\circ F_3,F_{m_1}\circ F_{m_2}\circ \cdots \circ F_{m_t}\},$ where $m_1=m_t=3$ and $m_i=4$ for $2\leq i\leq t-1$ with $t\geq 3$.
	\end{enumerate}
\end{theorem}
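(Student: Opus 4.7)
The plan is to mirror the proof of Theorem~\ref{thm:level-bipartite}. By Corollary~\ref{cor:level-pseudo-decomposable}, pseudo-Gorensteinness descends to and ascends from the indecomposable decomposition, so it suffices to classify indecomposable pseudo-Gorenstein bipartite graphs. By Theorem~\ref{thm:CM-bipartite} these are either $F_m$ for some $m \geq 1$ or $F_{m_1}\circ\cdots\circ F_{m_t}$ with $m_i \geq 3$ and $t \geq 2$. Since $F_1 = K_2$ is a complete intersection it is Gorenstein, accounting for the only $F_m$ appearing in the statement.

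Next I would rule out $F_m$ for $m \geq 3$ by showing $\hat{\beta}(S/J_{F_m}) \geq 2$, using the Artinian reduction $R' = S'/I'$ built in the proof of Theorem~\ref{thm:level-F_m}. There, $x_1 x_2 x_3$ was verified to be a socle element of $R'$. A parallel direct check shows that $x_{2m-3} x_{2m-2} x_{2m-1}$ also lies in $\soc(R')$: for any variable $x_k$ the product $x_k \cdot x_{2m-3} x_{2m-2} x_{2m-1}$ lies in $I'$ --- via a square in (i) when $k \in \{2m-3,2m-2,2m-1\}$; via the degree-$2$ relation $x_k x_{2m-2} \in I'$ from (i) when $k$ is even with $k \leq 2m-4$; and via the degree-$3$ relation $x_k x_{2m-3} x_{2m-2} \in I'$ of type~(ii) (two odd indices followed by one even) when $k$ is odd with $k \leq 2m-5$. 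For $m \geq 3$ the two socle monomials $x_1 x_2 x_3$ and $x_{2m-3} x_{2m-2} x_{2m-1}$ are distinct, so the Cohen--Macaulay type $\hat{\beta}(S/J_{F_m})$ is at least~$2$.

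For $G = F_{m_1}\circ\cdots\circ F_{m_t}$ with $t \geq 2$, I would induct on $t$, applying Ohtani's short exact sequence~\eqref{ohtani-ses} at a cut vertex $v$ corresponding to a gluing near one end of the chain, exactly as in Lemma~\ref{lemm:non-zero-betti-number}. The associated Tor long exact sequence~\eqref{ohtani-tor} in degree $n-1+\reg(S/J_G)$ links the extremal Betti number of $S/J_G$ to that of $S/((x_v,y_v)+J_{G_v\setminus v})$, since $S/((x_v,y_v)+J_{G\setminus v})$ and $S/J_{G_v}$ both have projective dimension $n-1$. The residual graph $G_v\setminus v$ is, up to a $2$-pure fan decoration at one endpoint, a shorter $\circ$-chain, which feeds the recursion. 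The base case $t=2$ is handled by the same socle-count technique applied to the explicit initial ideal of $F_{m_1}\circ F_{m_2}$ (patching the description of Corollary~\ref{initial-F_m} for the two blocks) and shows $\hat{\beta}(S/J_G) = 1$ precisely when $m_1 = m_2 = 3$.

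The main obstacle is the inductive bookkeeping of local contributions at each gluing vertex: one must verify that the extremal Betti number of the $2$-pure fan graph appearing at an \emph{interior} gluing equals~$1$ only when $m_i = 4$, while the \emph{endpoint} contribution equals~$1$ only when $m_1 = m_t = 3$. This requires explicit Artinian reductions for the small building blocks $F_3$, $F_4$ and their fan relatives arising as $G_v$, together with the verification that any other value of $m_i \geq 3$ introduces at least one additional socle generator and hence forces $\hat{\beta} \geq 2$.
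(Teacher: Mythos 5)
Your reduction to indecomposable blocks via Corollary~\ref{cor:level-pseudo-decomposable} and Theorem~\ref{thm:CM-bipartite} matches the paper, and your verification that $x_{2m-3}x_{2m-2}x_{2m-1}$ lies in $\soc(R')$ is correct. But the conclusion you draw from it, $\hat{\beta}(S/J_{F_m})\geq 2$, does not follow: graded Betti numbers can only \emph{drop} when passing from $S/\ini(J_G)$ to $S/J_G$, so a lower bound on the socle (hence on the type) of the Gr\"obner degeneration gives no lower bound on the extremal Betti number of $S/J_{F_m}$ itself. To make this step work you would need an additional input (e.g.\ the equality of extremal Betti numbers under squarefree Gr\"obner degenerations), whereas the paper discards $F_m$, $m\geq 3$, with no socle count at all: $S/J_{F_m}$ is level by Theorem~\ref{thm:level-F_m}, a level ring is pseudo-Gorenstein only if it is Gorenstein, and by \cite{GBEI} Gorenstein binomial edge ideals come only from paths. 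The same wrong-direction issue silently reappears in your base case $t=2$, where you need ``$\hat{\beta}\geq 2$'' whenever some $m_i\geq 4$.

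The more serious gap is in the $\circ$-chain part. The long exact sequence \eqref{ohtani-tor} only yields an \emph{injection} of $\Tor_n$ of $S/((x_v,y_v)+J_{G_v\setminus v})$ into $\Tor_{n-1}$ of $S/J_G$; that suffices for the non-vanishing statement of Lemma~\ref{lemm:non-zero-betti-number} (levelness), but pseudo-Gorensteinness requires the \emph{exact value} of the extremal Betti number, so you must additionally prove vanishing of the adjacent graded Tor of the middle term $S/((x_v,y_v)+J_{G\setminus v})\oplus S/J_{G_v}$ and control the connecting map; this, together with the ``bookkeeping'' of the interior gluings and the fan graphs $G_v$ that you yourself flag, is precisely what is missing. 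Your base case is also not a routine patching: the initial ideal of $F_{m_1}\circ F_{m_2}$ is not obtained by concatenating two copies of Corollary~\ref{initial-F_m}, since admissible paths cross the identified vertex and the answer depends on the labeling, and in any case the ``$=1$ exactly when $m_1=m_2=3$'' claim again needs the transfer from the initial ideal discussed above. The paper bypasses all of this by invoking the product formulas for the unique extremal Betti numbers of such chains, $\hat{\beta}(S/J_{F_{m_1}\circ F_{m_2}})=\hat{\beta}(S/J_{F_{m_1-1}})\hat{\beta}(S/J_{F_{m_2-1}})$ for $t=2$ and the analogous recursion for $t\geq 3$ (\cite[Lemma 5.5, Theorem 5.6]{RM-Ext}), from which $m_1=m_t=3$ and $m_i=4$ for the interior indices follow by a short induction on $t$. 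As written, your treatment of the chains is a programme rather than a proof.
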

\begin{proof}
	By Theorem \ref{thm:CM-bipartite} and Proposition \ref{prop:decomposability-level-pseudo}, it is enough to consider that either $G=F_m$ or $G=F_{m_1}\circ F_{m_2}\circ \cdots \circ F_{m_{t}} \text{ for } m\geq 1,m_i\geq 3.$
	Suppose $G=F_{m}$ for $m\geq 1$. Then for $m=1,2$, $G=P_{2m}$, and so $S/J_G$ is pseudo-Gorenstein. If $m\geq 3$, then by Theorem \ref{thm:level-F_m}, $S/J_G$ is level and not Gorenstein by \cite{GBEI}. Hence, $S/J_G$ is not pseudo-Gorenstein. Let $G=F_{m_1}\circ F_{m_2}\circ \cdots \circ F_{m_{t}}$ with $m_i\geq 3$ for $1\leq i\leq t$. Now, we proceed by induction on $t$. For $t=2$, $G=F_{m_1}\circ F_{m_2}$. By \cite[Lemma 5.5]{RM-Ext}, the unique extremal Betti number is given by $\hat{\beta}(S/J_G)=\hat{\beta}(S/J_{F_{m_1-1}})\hat{\beta}(S/J_{F_{m_2-1}})$. Therefore, $\hat{\beta}(S/J_G)=1$ if and only if $m_1=m_2=3$. Hence, $G=F_{m_1}\circ F_{m_2}$ is pseudo-Gorenstein if and only if $m_1=m_2=3$. 
	
	Suppose now $t\geq 3$. Then $G=F_{m_1}\circ F_{m_2}\circ \cdots \circ F_{m_t}$ for $m_i\geq 3$. It follows from \cite[Theorem 5.6]{RM-Ext} that $\hat{\beta}(S/J_G)=1$ if and only if $m_t> 3$ and $\hat{\beta}(S/J_{F_{m_1}\circ \cdots \circ F_{m_{t-1}-1}})=\hat{\beta}(S/J_{F_{m_t-1}})=1$. Therefore, our inductive hypothesis implies that $\hat{\beta}(S/J_{F_{m_1}\circ \cdots \circ F_{m_{t-1}-1}})=1$ if and only if $m_1=3=m_{t-1}-1$ and $m_i=4$ for $2\leq i\leq t-2$. Since $\hat{\beta}(S/J_{F_{m_t-1}})=1$, $m_t=3$. Hence, $m_1=m_t=3$ and $m_i=4$ for $2\leq i\leq t-1$.
\end{proof}

\section{Matroid}\label{sec:matroid}
In \cite{LMRR-S_2}, given a monomial order $<$ the simplicial complex $\Delta_{<}$ induced by $\ini_{<}(J_G)=I_{\Delta_{<}}$ has been defined. It is known (see \cite{Stanley}) that if $\Delta_{<}$ is matroid, then $I_{\Delta_{<}}$ is level. In this section, we classify $G$ such that the corresponding simplicial complex $\Delta_{<}$ is matroid, and this implies that $S/J_G$ is level. 

For $T\in \mathcal{M}(G)$, let $G\setminus T=G_1\sqcup G_2\sqcup \cdots \sqcup G_{c(T)}$. Let $|V(G_i)|=m_i$, say $V(G_i)=\{v_1^i,\dots,v_{m_i}^i\}$ for $1\leq i\leq c(T)$. Given $\textbf{v}=\left(v^1_{j_1},\dots,v_{j_{c(T)}}^{c(T)}\right)\in V(G_1)\times \cdots \times V(G_{c(T)})$, define
\[
F(T,\textbf{v})=\bigcup_{i=1}^{c(T)}\{\{y_j:j\leq v^i_{j_i}\}\cup \{x_j:j\geq v^i_{j_i}\}\}.
\]

\begin{theorem}(\cite[Corollary 1]{LMRR-S_2})\label{simplicial-complex}
	Let $G$ be a graph. Then $\ini_{<}(J_G)=I_{\Delta_{<}}$, where 
	\[
	\mathcal{F}(\Delta_{<})=\bigcup_{T\in \mathcal{M}(G)} \{F(T,\textbf{v}):\textbf{v} \in V(G_1)\times \cdots \times V(G_{c(T)})\}.
	\]
\end{theorem}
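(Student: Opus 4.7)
The plan is to leverage the standard primary decomposition
\[
J_G \;=\; \bigcap_{T \in \mathcal{M}(G)} P_T(G), \qquad P_T(G) \;=\; (x_t,y_t : t \in T) \;+\; \sum_{i=1}^{c(T)} J_{\widetilde{G_i}},
\]
where $\widetilde{G_i}$ denotes the complete graph on $V(G_i)$. The strategy is to translate this decomposition through the operation $\ini_<$ and then read off the facets of $\Delta_<$ directly as subsets of $\{x_i,y_i : i \in [n]\}$.

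The technical heart of the argument is to establish
\[
\ini_<(J_G) \;=\; \bigcap_{T \in \mathcal{M}(G)} \ini_<(P_T(G)).
\]
This is not automatic, since initial ideals do not commute with intersection in general. The explicit Gröbner basis of $J_G$ in terms of admissible paths (recalled in Section \ref{sec:bipartite}) supplies the required compatibility: each admissible-path generator $x_iy_j u_\pi$ of $\ini_<(J_G)$ lies in $\ini_<(P_T)$ for every $T$ containing a cut along $\pi$, and conversely any monomial in the right-hand intersection reduces to zero modulo this Gröbner basis. This is the main obstacle in the proof and demands the careful combinatorics of admissible paths.

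Next I would compute each $\ini_<(P_T(G))$. Since the summands of $P_T$ live on pairwise disjoint variable sets, initial ideals split as sums. For the complete graph on $V(G_i)$, the ideal $J_{\widetilde{G_i}}$ is the $2$-minor ideal of a generic $2 \times m_i$ matrix, whose lex initial ideal is the staircase ideal $(x_a y_b : a,b \in V(G_i),\ a < b)$. The Stanley–Reisner complex of this staircase ideal is pure of dimension $m_i$, with facets
\[
F_i(v) \;=\; \{y_a : a \leq v,\ a \in V(G_i)\} \;\cup\; \{x_a : a \geq v,\ a \in V(G_i)\}
\]
indexed by $v \in V(G_i)$. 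Because the Stanley–Reisner complex of a sum of monomial ideals in disjoint variable sets is the join of the component complexes, the facets of the complex associated to $\ini_<(P_T(G))$ are obtained by choosing one $v_{j_i}^i \in V(G_i)$ per component and forming the union $\bigcup_{i} F_i(v_{j_i}^i)$; these are precisely the sets $F(T,\mathbf{v})$.

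Intersecting the monomial ideals $\ini_<(P_T(G))$ over $T \in \mathcal{M}(G)$ corresponds to taking the union of their Stanley–Reisner complexes, giving the asserted description
\[
\mathcal{F}(\Delta_<) \;=\; \bigcup_{T \in \mathcal{M}(G)} \{F(T,\mathbf{v}) : \mathbf{v} \in V(G_1) \times \cdots \times V(G_{c(T)})\},
\]
once one verifies that no $F(T,\mathbf{v})$ is strictly contained in another — an antichain check that follows from the minimality of the cut sets in $\mathcal{M}(G)$, since a strict containment $F(T',\mathbf{v}') \subsetneq F(T,\mathbf{v})$ would force $T' \subsetneq T$ together with compatible choices, contradicting $T \in \mathcal{M}(G)$. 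Without the Gröbner-basis commutation step, one only obtains the inclusion $\ini_<(J_G) \subseteq \bigcap_T \ini_<(P_T)$ and thus that every $F(T,\mathbf{v})$ is a face; the converse inclusion, which is what certifies that these are all the facets, is the step where the bulk of the work lies.
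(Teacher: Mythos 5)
Note first that the paper offers no proof of this statement: it is quoted directly from \cite[Corollary 2.2]{LMRR-S_2}, so there is no in-paper argument to measure yours against. Judged on its own, your outline has the right architecture: the primary decomposition $J_G=\bigcap_{T\in\mathcal{M}(G)}P_T(G)$, the computation of each $\ini_<(P_T(G))$ as a sum of ideals in disjoint variable sets whose Stanley--Reisner complex is the join of the staircase complexes with facets $F_i(v)$, and the translation of intersections of squarefree monomial ideals into unions of complexes are all correct and routine. The genuine gap sits exactly where you yourself locate the difficulty: the equality $\ini_<(J_G)=\bigcap_{T\in\mathcal{M}(G)}\ini_<(P_T(G))$. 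The inclusion $\subseteq$ is automatic from $J_G\subseteq P_T(G)$; the reverse inclusion carries essentially all the content of the theorem, and your proposal only asserts that ``any monomial in the right-hand intersection reduces to zero modulo this Gr\"obner basis.'' No mechanism is supplied for why that reduction must succeed, and it is not a formal consequence of merely possessing the admissible-path Gr\"obner basis. To close this step one needs either a genuine combinatorial analysis of which squarefree monomials avoid all leading terms $x_iy_ju_\pi$ (and a matching of the maximal such sets with the $F(T,\mathbf{v})$), or an external input --- e.g.\ a Hilbert-function/multiplicity comparison between $S/\ini_<(J_G)$ and $S/\bigcap_T\ini_<(P_T)$, or the fact that $J_G$ lies in a family of ideals for which forming initial ideals commutes with intersecting the minimal primes. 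As written, the argument does not close.

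A secondary, fixable defect is the antichain check at the end. If $F(T',\mathbf{v}')\subseteq F(T,\mathbf{v})$, then since $F(T',\mathbf{v}')$ meets $\{x_j,y_j\}$ for every $j\notin T'$ while $F(T,\mathbf{v})$ contains neither $x_t$ nor $y_t$ for $t\in T$, the forced inclusion is $T\subseteq T'$, not $T'\subsetneq T$; moreover $\mathcal{M}(G)$ is not an antichain (already for the path on three vertices one has $\emptyset$ and $\{2\}$ both in $\mathcal{M}(G)$), so ``contradicting $T\in\mathcal{M}(G)$'' is not a valid conclusion. A correct argument notes that $F(T,\mathbf{v})$ contains both $x_v$ and $y_v$ only for the chosen vertices $v=v^i_{j_i}$, which forces every component of $G\setminus T'$ lying inside a component $G_i$ of $G\setminus T$ to contain $v^i_{j_i}$; hence $G_i\setminus T'$ is connected, and any $t\in T'\cap V(G_i)$ would then violate the cut-point property defining $\mathcal{M}(G)$. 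This gives $T=T'$ and then $\mathbf{v}=\mathbf{v}'$. That is where the definition of $\mathcal{M}(G)$ genuinely enters, and it deserves the few extra lines.
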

\vskip 2mm

Let us recall one equivalent definition of matroid:

\begin{definition}\label{matroid-defn}
	A simplicial complex $\Delta$ is a matroid if for all $F,F'\in \mathcal{F}(\Delta)$ and $i\in F$, there exists a $j\in F'$ such that $(F\setminus \{i\})\cup \{j\}\in \mathcal{F}(\Delta)$.
\end{definition}
\begin{theorem}\label{not-matroid}
	Let $G$ be a connected graph on $n$ vertices with $n\geq 2$. Let $\Delta_{<}$ be a simplicial complex such that $\ini_{<}(J_G)=I_{\Delta_{<}}$. Then $\Delta_{<}$ is matroid if and only if $G=P_n$ for some $n$.
\end{theorem}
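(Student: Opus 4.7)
The plan is to analyze both implications via the facet description of Theorem \ref{simplicial-complex} together with the shape of the minimal generators of $\ini_<(J_G)$ coming from admissible paths.

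For the "if" direction, I would first observe that in $P_n$ with the standard labeling $1,\ldots,n$ along the path, any path between vertices $i$ and $j$ with $i<j$ passes through the intermediate vertices $i+1,\ldots,j-1$, none of which is $<i$ or $>j$. Hence the only admissible paths in $P_n$ are its edges, and $\ini_<(J_{P_n})=(x_iy_{i+1}:1\leq i\leq n-1)$, a squarefree monomial ideal whose generators have pairwise disjoint supports. Consequently $\Delta_<$ is the simplicial complex whose facets are formed by choosing one element to omit from each pair $\{x_i,y_{i+1}\}$, keeping $y_1$ and $x_n$ throughout; this is a direct sum of $n-1$ copies of the uniform matroid $U_{1,2}$ with two coloops, and hence a matroid.

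For the "only if" direction I would argue by contrapositive. Assume $G$ is connected and $G\neq P_n$. Then $G$ is either a tree with a vertex of degree $\geq 3$ or $G$ contains a cycle. In the tree case, let $v$ be a vertex of degree $\geq 3$; being an internal vertex of the tree, $v$ is a cut vertex with $c(\{v\})\geq 3$, so $\{v\}\in\mathcal{M}(G)$. Any facet $F(\{v\},\mathbf{v})$ then has cardinality $(n-1)+c(\{v\})\geq n+2$, while $F(\emptyset,v')$ has cardinality $n+1$. Thus $\Delta_<$ is not pure, and therefore not a matroid.

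If $G$ contains a cycle, fix one and let $v_1$ be its smallest-labelled vertex, with neighbours $u,w$ on the cycle (both strictly greater than $v_1$). The edges $\{v_1,u\}$ and $\{v_1,w\}$ give the minimal generators $x_{v_1}y_u$ and $x_{v_1}y_w$ of $\ini_<(J_G)$, and their supports share only $x_{v_1}$. The circuit-exchange characterization of matroid complexes on minimal non-faces would then demand a minimal generator of $\ini_<(J_G)$ supported in $\{y_u,y_w\}$; but by the admissible-path description recalled earlier, every minimal generator has the form $x_ay_b\,u_\pi$ and therefore contains an $x$-variable in its support, so no such generator exists. Hence $\Delta_<$ fails the matroid axiom. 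The most delicate step is this cycle case, where one must check the minimality of $x_{v_1}y_u$ and $x_{v_1}y_w$ as generators of $\ini_<(J_G)$ (immediate since they are squarefree of degree $2$, neither divides the other, and every generator of $\ini_<(J_G)$ has degree $\geq 2$) and invoke the circuit-exchange formulation of the matroid axiom on the level of minimal non-faces.
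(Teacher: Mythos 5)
Your proposal is correct, but the ``only if'' direction follows a genuinely different route from the paper. The paper works directly with the basis-exchange Definition \ref{matroid-defn} applied to the two extreme facets $F(\emptyset,(1))$ and $F(\emptyset,(n))$ of Theorem \ref{simplicial-complex}: the exchange forces $\{i\}\in\mathcal{M}(G)$ for every $2\le i\le n-1$, so $G$ has $n-2$ cut vertices, and a separate graph-theoretic induction shows such a connected graph must be a path. You instead argue by contrapositive and split into two cases: if $G$ is a tree with a vertex of degree at least $3$, you compare facet cardinalities $n-|T|+c(T)$ for $T=\emptyset$ and $T=\{v\}$ and conclude $\Delta_<$ is not pure (equivalently, $J_G$ is not unmixed), hence not a matroid; if $G$ has a cycle, you apply circuit elimination to the two degree-two circuits $\{x_{v_1},y_u\}$, $\{x_{v_1},y_w\}$ at the minimal vertex of the cycle and use the admissible-path description of $\ini_<(J_G)$ from \cite{HHHKR} to rule out a circuit inside $\{y_u,y_w\}$. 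Your route avoids the induction on cut vertices and localizes the failure of matroidness in very concrete data (purity, respectively degree-two generators), while the paper's route has the advantage of staying entirely within the single axiom it states (Definition \ref{matroid-defn}) and the facet description; you should say explicitly that purity of matroid complexes and the circuit-elimination axiom for minimal non-faces are standard equivalents of that basis-exchange definition. Note also that both your argument and the paper's are carried out for the lex-type order and labeling under which the admissible-path Gr\"obner basis and Theorem \ref{simplicial-complex} apply (the paper's ``after relabeling'' reduction), and your ``if'' direction is essentially the paper's: for the naturally labeled path $\ini_<(J_{P_n})$ is a complete intersection of quadrics with pairwise disjoint supports, which you verify to be a matroid by hand instead of citing \cite[Chapter III, Proposition 3.1]{Stanley}.
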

\begin{proof}
	Without loss of generality after relabeling we may assume that $x_1>x_2>\cdots > x_n >y_1>y_2 >\cdots >y_n$, for a given monomial order $<$. Let $\Delta_{<}$ be matroid. By \cite[Corollary 3.9]{HHHKR}, $\emptyset \in \mathcal{M}(G)$. Therefore, $$F(\emptyset,(i))=\{y_1,\dots,y_i,x_i,\dots,x_n\}\in \mathcal{F}(\Delta_{<}) \text{ for } 1\leq i\leq n.$$
	Let $F=F(\emptyset,(1))=\{y_1,x_1,x_2,\dots,x_n\}$ and $F'=F(\emptyset,(n))=\{y_1,y_2,\dots,y_n,x_n\}$. Then $F\setminus F'=\{x_1,x_2,\dots,x_{n-1}\}$ and $F'\setminus F=\{y_2,y_3,\dots,y_n\}$.
	We have to find all facets satisfying Definition \ref{matroid-defn} for $F$ and $F'$ above defined, by substituting $x_i\in F$ such that $2\leq i\leq n-1$, with $y_j$ such that $2\leq j\leq n$. If we consider the set
	
	$$H=(F\setminus \{x_i\})\cup \{y_i\}=\{y_1,x_1\}\cup \{x_2,\dots,x_{i-1},y_i,x_{i+1},\dots,x_n\},$$ 
	this is not a facet by the definition \ref{matroid-defn}. In fact, since all the indices of $H$ are in $[n]$ then $H$ is a facet related to the cut set $\emptyset$, and because of our assumption on the labeling $y_i$ must be  $x_i$ obtaining a contradiction. Hence,
	\begin{align*}
		H' & =F\setminus \{x_i\}\cup \{y_j\} \\ & = \{\{y_1,x_1\}\cup \{x_j,y_j\} \cup \{x_2,\dots,x_{j-1},x_{j+1},\dots,x_{i-1},x_{i+1},\dots,x_n\}\} \in \mathcal{F}(\Delta_{<})
	\end{align*}
	for some $j\in \{2,\dots,n\}\setminus \{i\}$. Since all the indices of $H'$ are in $[n]\setminus \{i\}$, $H'$ is a facet related to the cut set $\{i\}$ which implies that $\{i\}\in \mathcal{M}(G)$ for $2\leq i\leq n-1$.
	
	\noindent
	\textit{Claim: }If $G$ has $n-2$ cut vertices, then $G$ is a path.
	
	\noindent
	\textit{Proof of the Claim: }
	We prove it by induction on $n$. For $n=3$, either $G=K_3$ or $G=P_3$. In $K_3$, there are no cut vertices, and so $G=P_3$. Assume that the assertion is true for any graph with $<n$ vertices, and $2,3,\dots,n-1$ are cut vertices of $G$. 
	Since $\{2\}\in \mathcal{M}(G)$, we write $G=G_1\cup G_2$ with $V(G_1)\cap V(G_2)=\{2\}$. 
	For $i\neq 2$, it can be observed that if $\{i\}\in \mathcal{M}(G)$, then either $\{i\}\in \mathcal{M}(G_1)$ or $\{i\}\in \mathcal{M}(G_2)$. Let $1,n\in V(G_1)$, then for the graph $G_2$, all the vertices in $V(G_2)$, with at most the exception of the vertex $2$ are cut vertices. Thus, by inductive hypothesis, $G_2$ is a path graph. This implies that there exists a vertex $v\neq 2$ with $v\in V(G_2)$ that is a simplicial vertex, but this is absurd because $v$ is a cut vertex.

	Therefore, we may assume that $1\in V(G_1)$ and $n\in V(G_2)$. Since $G_1$ and $G_2$ both satisfy the inductive hypothesis, $G_1,G_2$ both are path graphs, and the vertex $2$ is a simplicial vertex for both paths. Hence, $G$ is also a path graph. Hence, if $\Delta_{<}$ is a matroid, then $G$ is a path. 
	
	Let $G=P_n$ on $[n]$ with $E(G)=\{\{i,i+1\}:1\leq i\leq n-1\}$. We consider the lexicographic order $<$ induced by $x_1>x_2>\cdots > x_n >y_1>y_2 >\cdots >y_n$. Then $\ini_{<}(J_G)$ is complete intersection. Therefore, by \cite[Chapter III, Proposition 3.1]{Stanley}, $\Delta_{<}$ is matroid.
\end{proof}

\section{Computation of graphs with $n\in\{2,\ldots,12\}$ vertices}\label{sec: computation}

The main aim of this section is to find, using a computational approach, the cardinalities of the set of graphs $G$ with at most 12 vertices such that the corresponding rings $S/J_G$ are level or pseudo-Gorenstein. This computation is based upon the database provided in the paper \cite{LMRR}. Finally, we discuss some interesting examples obtained.

The previous procedure was executed for the graphs whose number of vertices is between $2$ and $12$ that are Cohen-Macaulay. In Table \ref{tab:indec}, we report the number of indecomposable graphs on $n$ vertices.
\begin{table}[H]
	\centering
	\begin{tabular}{|c|c|c|c|c|c|c|c|c|c|c|c|c|}
		\hline
		$n$            &2 &3 &4 &5 &6 &7  &8  &9   &10  &11   &12    &Tot\\
		\hline 
		Cohen-Macaulay &1 &1 &1 &2 &5 &15 &51 &194 &833 &3824 &19343 &24270   \\
		\hline
		Level          &1 &1 &1 &2 &3 &5  &12 &27  &82  &231    &726     & 1091  \\
		\hline
		pseudo-Gorenstein &1 &0 &0 &0 &2 &5  &8 &34  &144  &520    &2303     &3017   \\
		\hline
		
	\end{tabular}
	\caption{Enumeration of indecomposable Cohen-Macaulay, level and pseudo-Gorenstein rings}
	\label{tab:indec}
\end{table}

We observe that the only indecomposable level binomial edge ideal that is pseudo-Gorenstein is the one in the first column that is the the graph $K_2$, that is the Gorenstein one. In fact, any path is decomposable into indecomposable graphs that are $K_2$ (its edges). Hence in every column different from the first one, the set of pseudo-Gorenstein does not intersect with the set of level ones, since path are the only Gorenstein rings in this context. 

Moreover, in the column representing the graphs with $6$ vertices,  the five Cohen-Macaulay binomial edge ideals are either level or pseudo-Gorenstein. These five graphs belong to the ones studied in this paper. They are two pseudo-Gorenstein rings of regularity $3$ related to the graphs described in the  Example \ref{exa:pseudo-gorenstein-cones}, i.e. the graph $G_1^1$ and $G_1^2$. Then there are three level rings. In these cases the related graphs are : the complete graph $K_6$, the bipartite graph $F_3$, that has been proved to be level in Section \ref{sec:bipartite}, and the graph $H_2$ of Example \ref{exa:levelcones} setting $K_{r+1}=K_3$ and $K_{s+1}=K_2$.

In the set of the graphs with $7$ vertices, there are two  interesting graphs that induce pseudo-Gorenstein rings  of regularity $4$.
They are the graphs with $7$ vertices of Fig.4 and Fig.7 of \cite{Rinaldo-Cactus}, and they are not cones.
Moreover, the first one belongs to the set of cactus graph, and the second one belongs to the set of chain of cycles, studied respectively in \cite{Rinaldo-Cactus} and \cite{LMRR-S_2}. In those cases the Cohen-Macaulay property is equivalent to combinatorial properties, namely accessible and strongly-unmixed properties. So the following question naturally arises
\begin{question}
 Is it possible to characterize level and pseudo-Gorenstein binomial edge ideals of cactus (resp. chain of cycles) graphs?	
\end{question}
If this question has a positive answer, because of the characterization of chordal and traceable graphs of \cite{DAV2}, then
\begin{question}
	Is it possible to characterize level and pseudo-Gorenstein binomial edge ideals of chordal (resp. traceable) graphs?	
\end{question}

Finally, we refer readers to \cite{RS} for a complete description of the algorithm that we used, the obtained database, and the code in Macaulay 2.

\bibliographystyle{plain}
\bibliography{Reference}

\end{document}